\newcommand{\be}{\begin{equation}}
\newcommand{\ee}{\end{equation}}
\newtheorem{theorem}{Theorem}[section]
\newtheorem{lemma}[theorem]{Lemma}
\newtheorem{proposition}[theorem]{Proposition}
\theoremstyle{definition}
\newtheorem{definition}[theorem]{Definition}
\theoremstyle{remark}
\newtheorem{remark}[theorem]{Remark}
\numberwithin{equation}{section}
\begin{document}

\title{Isomorphism between two realizations of Yangian $Y(\mathfrak{so}_3)$}
\author{Naihuan Jing, Ming Liu$^*$}
\address{NJ: Department of Mathematics, North Carolina State University, Raleigh, NC 27695, USA}
\address{ML: Chern Institute of Mathematics, Nankai University, Tianjin, 300071, China}

\thanks{{\scriptsize
\hskip -0.4 true cm MSC (2010): Primary: 17B30; Secondary: 17B68.
\newline Keywords: Yangian, RTT realization, Drinfeld's new realization, Gauss decomposition.\\
$*$Corresponding author.
}}

\maketitle

\begin{abstract}  The isomorphism between Drinfeld's new realization
and the FRT realization is proved for the Yangian algebra $Y(\mathfrak{so}_3)$
by using Gauss decomposition.
\end{abstract}
\section{Introduction}
When the concept of quantum groups was introduced in 1985 two main classes of examples are
Drinfeld-Jimbo quantum enveloping algebras of symmetrizable Kac-Moody Lie algebras
\cite{D1, J}
and the Yangian algebra \cite{D1} associated with the complex simple Lie algebra.
These quantum algebras correspond respectively to the trigonometric and rational solutions of the quantum Yang-Baxter equation.
They have been used extensively in mathematics and theoretical physics, in particular in the study of solvable statistical models and quantum field theory. The theory of Yangian algebras has ample applications in
statistical mechanics, representation theory and algebraic combinatorics, for example
many interesting combinatorial properties of the general linear groups have been generalized to
the Yangian algebra of type $A$.
For a beautiful comprehensive introduction, see Molev's monograph \cite{M} (also \cite{MNO}).

There are three important realizations of the Yangians. The first one was
given by Drinfeld using generators and relations similar to Serre relations
in his fundamental paper \cite{D1} in 1985. Drinfeld pointed out that the Yangian algebra is the unique homogeneous quantization
of the half-loop algebra $\mathfrak{g}[u]$ associated to a simple Lie algebra $\mathfrak{g}$. Further along
 this development, Drinfeld \cite{D2} gave the second realization called Drinfeld realization for both quantum affine algebras and Yangians,
which can be viewed as the quantum analogue of the loop realization of the affine Lie algebras. Using this new
realization of Yangians,
Drinfeld developed and classified finite dimensional irreducible representations of Yangians.
The third realization is the generalization of the Faddeev-Reshetikhin-Takhatajan realization
which has a longer history originated from the quantum inverse
scattering method \cite{TF}. Under this realization the comultiplication formulas
have a particular simple form for both quantum affine algebras and Yangians.

Drinfeld stated that the FRT realization and Drinfeld's new realization of Yangian and quantum affine algebras are isomorphic. In 1994, Ding and Frenkel \cite{DF}
proved the isomorphism between the FRT realization and Drinfeld's new realization for quantum affine algebras of
type $A$. Later in 2005
Brundan and Kleshchev \cite{BK} proved the isomorphism between the two realizations of Yangians in type $A$.
For BCD type Yangians, Arnaudon, Molev and Ragoucy \cite{AMR} studied
the FRT realization of the Yangian and classified finite dimensional representations and gave
PBW theorem for the Yangian.  In \cite{AACFR}, Arnaudon et al proved that the FRT realization of BCD type Yangians
are indeed a homogeneous quantization of $\mathfrak{g}[u]$ corresponding to its canonical bialgebra structure.
Still less is known for the relations between Drinfeld's new realization and the FRT realization of BCD type Yangians.
Several known constructions or isomorphisms of Lie algebras are usually exclusively for Lie algebras.
For example, the Yangians associated to the orthogonal and symplectic Lie algebras are not known to be
subalgebras of the Yangian algebra associated to the general linear algebras. In fact this is why Olshanskii's twisted Yangians are introduced by extending the imbedding \cite{O1}.

In the lower rank cases, Arnaudon, Molev and Ragoucy proved the isomorphism between the FRT realization of $Y(\mathfrak{so}_3)$ and $Y({\mathfrak{sl}_2})$
in \cite{AMR} by using the fusion procedure for R-matrix.
In this paper we will prove the
isomorphism between the two realizations of $Y(\mathfrak{so}_3)$ by using the Gauss decomposition similar to
Ding-Frenkel's method \cite{DF}. It would be interesting to generalize this construction to
higher rank cases.

The paper is organized as follows. In the second section, we will give a brief introduction of the FRT or RTT realization of $Y(\mathfrak{so}_3)$. In the third section, we will study the Gauss decomposition of $Y(\mathfrak{so}_3)$ and give the relations between the Gauss generators. In the last section, we recall the Drinfeld's new realization and prove the main theorem.

\section{RTT realization of Yangian algebras corresponding to orthogonal Lie algebras}
In \cite{AMR} the RTT presentation of Yangian corresponding to orthogonal Lie algebras was studied.
Here we briefly recall the RTT realization of the Yangian with respect to orthogonal Lie algebras, especially the case of $\mathfrak{so}_3$.
Before introducing the RTT presentation, we list the following preliminaries and notations.
\vskip.1in

For $N=2n$ or $2n+1$ we enumerate the rows and columns of $N\times N$ matrices by the indices
$-n,...,-1,1,...,n$ and $-n,...-1,0,1,...n$ respectively
Let $t: End\mathbb{C}^N\rightarrow End\mathbb{C}^N$ be the transposition given by
\begin{equation*}
(e_{ij})^t=e_{-j,-i}.
\end{equation*}
\vskip.1in
Set $\kappa=N/2-1$, and consider the R-matrix
\begin{equation}
R(u)=1-\frac{P}{u}+\frac{Q}{u-\kappa},
\end{equation}
where $P=\sum_{i,j=-n}^ne_{ij}\otimes e_{ji}$, and $Q=P^{t}=\sum_{i,j=-n}^{n}e_{ij}\otimes e_{-i,-j}$.
It is easy to check that $R(u)$ is a rational solution of Yang-Baxter equation:
\begin{equation*}
R_{12}(u-v)R_{13}(u)R_{23}(v)=R_{23}(v)R_{13}(u)R_{12}(u-v).
\end{equation*}
\vskip.2in
We define an algebra $Y_R(\mathfrak{so}_N)$ by using the R-matrix $R(u)$ as follows.
\begin{definition}
The algebra $Y_R(\mathfrak{so}_N)$ is a unital associative algebra generated by $t^{(r)}_{ij}$, $r\in \mathbb{Z}_{+}$, $i,j\in\{-n,-n+1,...,n-1,n\}$,
the defining relations can be written as the following RTT form by the matrix of generators $T(u)$:
\begin{equation}\label{RTT}
R(u-v)T_1(u)T_2(v)=T_2(v)T_1(u)R(u-v),
\end{equation}
\begin{equation}
T(u)T^t(u+\kappa)=T^t(u+\kappa)T(u)=1,
\end{equation}\label{unitary relation}
where $T(u)=\sum_{i,j=-n}^n t_{ij}(u)\otimes e_{ij}$, $t_{ij}(u)=\sum_{r=0}^\infty t_{ij}^{(r)}u^{-r}$, $t^{(0)}_{ij}=\delta_{ij}$.
\end{definition}

Especially, for the $\mathfrak{so}_3$ case the algebra $Y_R(so_3)$ is generated by $t^{(r)}_{ij}$, $i,j\in\{-1,0,1\}$
and subject to the RTT relations.
\begin{proposition}
For the case $Y(\mathfrak{so}_3)$, the RTT generating relation (\ref{RTT}) can be written equivalently in terms of generating series as following:
\begin{equation}\label{generating relation1}
\begin{aligned}
&[t_{ij}(u),t_{kl}(v)]=\frac{1}{u-v}(t_{kj}(u)t_{il}(v)-t_{kj}(v)t_{il}(u))\\
&-\frac{1}{u-v-\frac{1}{2}}(\delta_{k,-i}\sum^{1}_{p=-1}t_{pj}(u)t_{-pl}(v)-\delta_{l,-j}\sum^{1}_{p=-1}t_{k,-p}(v)t_{ip}(u)).
\end{aligned}
\end{equation}
\end{proposition}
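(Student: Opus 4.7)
The plan is a direct component-by-component expansion of the matrix RTT equation in the basis $\{e_{ij}\otimes e_{kl}\}_{i,j,k,l\in\{-1,0,1\}}$. Specializing the $R$-matrix to $\mathfrak{so}_3$ gives $\kappa = 1/2$, and moving the identity terms of $R$ to the opposite side rearranges (\ref{RTT}) into
\[
T_1(u)T_2(v) - T_2(v)T_1(u) = T_2(v)T_1(u)\,A(u-v) - A(u-v)\,T_1(u)T_2(v),
\]
where $A(u-v) = -P/(u-v) + Q/(u-v-1/2)$. Using $T_1 = T\otimes I$ and $T_2 = I\otimes T$, the left-hand side evaluated on $e_{ij}\otimes e_{kl}$ produces exactly the commutator $[t_{ij}(u), t_{kl}(v)]$.

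Next, I would read off the $P$- and $Q$-contributions on the right. From the definitions $P = \sum e_{ab}\otimes e_{ba}$ and $Q = \sum e_{ab}\otimes e_{-a,-b}$, multiplication by $P$ swaps the row (resp.\ column) indices of the two tensor factors, while $Q$ produces a pairing of opposite indices together with a Kronecker factor identifying that pair. A short calculation then shows that $PT_1T_2$ and $T_2T_1 P$ contribute the two terms $t_{kj}(u)t_{il}(v)$ and $t_{kj}(v)t_{il}(u)$ with coefficients $\pm 1/(u-v)$, while $QT_1T_2$ and $T_2T_1 Q$ contribute $\delta_{k,-i}\sum_p t_{pj}(u)t_{-p,l}(v)$ and $\delta_{l,-j}\sum_p t_{k,-p}(v)t_{ip}(u)$ with coefficients $\mp 1/(u-v-1/2)$.

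Assembling the four contributions yields exactly the right-hand side of (\ref{generating relation1}); conversely, since the coefficients of $e_{ij}\otimes e_{kl}$ determine the matrix equation, the collection of such component identities forces (\ref{RTT}), giving the equivalence. There is no conceptual obstacle here: the proof is a routine bookkeeping exercise. The only step requiring real care is the index manipulation imposed by the transposition $(e_{ij})^t = e_{-j,-i}$ that defines $Q$; the opposite-sign indices inside $Q$ are responsible for both the Kronecker factors ($\delta_{k,-i}$ and $\delta_{l,-j}$) and the internal $-p$'s in the summations, and their positions must be tracked consistently on both sides of the RTT relation.
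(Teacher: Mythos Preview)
Your approach is correct and is exactly the standard derivation: expand $R(u-v)T_1(u)T_2(v)=T_2(v)T_1(u)R(u-v)$ in the basis $e_{ij}\otimes e_{kl}$, separate the identity, $P$, and $Q$ contributions of $R$, and read off the $(ij,kl)$ entry. The paper itself states this proposition without proof, treating it as a routine unpacking of the RTT relation, so your write-up is in fact more detailed than what the paper provides; the only point to watch, as you note, is the index bookkeeping for the $Q$-term coming from the transposition $(e_{ij})^t=e_{-j,-i}$.
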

\vskip.2in
The RTT defining relation (\ref{RTT}) can be rewritten equivalently as follows:
\begin{equation}\label{RTT2}
T^{-1}_2(v)R(u-v)T_1(u)=T_1(u)R(u-v)T^{-1}_2(v).
\end{equation}
\vskip.2in
Denote by $t'_{ij}(u)$ the ij-th element of $T^{-1}(u)$.

\begin{proposition}
The defining relation (\ref{RTT}) is equivalent to
\begin{equation}\label{generating relation2}
\begin{aligned}
&[t_{pq}(u),t'_{rs}(v)]=\frac{1}{u-v-\frac{1}{2}}(t'_{r,-p}(v)t_{-s,q}(u)-t_{p,-r}(u)t'_{-q,s}(v))\\
&+\frac{1}{u-v}(\delta_{qr}\sum^{1}_{i=-1}t_{pi}(u)t'_{is}(v)-\delta_{ps}\sum^{1}_{i=-1}t'_{ri}(v)t_{iq}(u)).
\end{aligned}
\end{equation}
\end{proposition}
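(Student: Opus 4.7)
The plan is to start from the equivalent form (\ref{RTT2}) of the RTT relation and extract its $(p,r),(q,s)$ matrix entry in $\mathrm{End}\,\mathbb{C}^3 \otimes \mathrm{End}\,\mathbb{C}^3$. Substituting the explicit expression $R(u-v) = 1 - P/(u-v) + Q/(u-v-1/2)$ into (\ref{RTT2}) and rearranging yields
\begin{equation*}
[T_1(u), T_2^{-1}(v)] = \frac{T_1(u)PT_2^{-1}(v) - T_2^{-1}(v)PT_1(u)}{u-v} - \frac{T_1(u)QT_2^{-1}(v) - T_2^{-1}(v)QT_1(u)}{u-v-1/2},
\end{equation*}
whose $(p,r),(q,s)$ entry on the left-hand side is simply $[t_{pq}(u), t'_{rs}(v)]$.

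Next I would compute the $(p,r),(q,s)$ entries of the four operators on the right using $P = \sum_{a,b} e_{ab}\otimes e_{ba}$, $Q = \sum_{a,b} e_{ab}\otimes e_{-a,-b}$, and the rule $e_{ij}e_{kl}=\delta_{jk}e_{il}$. A direct evaluation gives
\begin{align*}
(T_1(u) P T_2^{-1}(v))_{(p,r),(q,s)} &= \delta_{qr} \sum_{i=-1}^{1} t_{pi}(u) t'_{is}(v), \\
(T_2^{-1}(v) P T_1(u))_{(p,r),(q,s)} &= \delta_{ps} \sum_{i=-1}^{1} t'_{ri}(v) t_{iq}(u), \\
(T_1(u) Q T_2^{-1}(v))_{(p,r),(q,s)} &= t_{p,-r}(u) t'_{-q,s}(v), \\
(T_2^{-1}(v) Q T_1(u))_{(p,r),(q,s)} &= t'_{r,-p}(v) t_{-s,q}(u).
\end{align*}
Inserting these into the displayed identity and grouping terms reproduces (\ref{generating relation2}) on the nose.

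The equivalence is genuinely an \emph{if and only if}. Since $T(v)$ is a formal power series in $v^{-1}$ with constant term $1$, it is invertible, and (\ref{RTT2}) is obtained from (\ref{RTT}) by multiplying both sides by $T_2^{-1}(v)$ on the left and right; the reverse operation recovers (\ref{RTT}) from (\ref{RTT2}). Likewise the step of extracting $(p,r),(q,s)$ matrix entries is reversible, because the collection of scalar identities (\ref{generating relation2}) over all admissible indices is exactly the single matrix identity (\ref{RTT2}).

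The main obstacle I anticipate is purely combinatorial bookkeeping: each of the four matrix compositions requires careful tracking of which tensor slot each factor occupies, and in particular the sign-reversal $a \mapsto -a$ built into $Q$ is easy to place on the wrong index. Any slip would misplace the minus signs in the $(u-v-1/2)^{-1}$ terms, so the calculation, though routine, must be carried out with care.
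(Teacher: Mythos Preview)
Your argument is correct. The paper itself states this proposition without proof, so there is nothing to compare against directly; your route via the rewritten relation (\ref{RTT2}) and extraction of the $(p,r),(q,s)$ matrix entry is exactly the standard computation one would expect, and the four entry formulas you list for $T_1PT_2^{-1}$, $T_2^{-1}PT_1$, $T_1QT_2^{-1}$, $T_2^{-1}QT_1$ check out. The equivalence argument (invertibility of $T(v)$ as a formal series and the bijection between the matrix identity and the collection of scalar identities) is also sound.
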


\section{Gauss decomposition of $Y_R(\mathfrak{so}_3)$}
In this section, we will study the Gauss decomposition of $T(u)$ and the commutation relations between the
``Gauss generators''.
\begin{theorem}\label{Gauss decth}
In $Y_R(\mathfrak{so}_3)$ the matrix $T(u)$ has the following unique decomposition:
\begin{align}\label{Gauss decomposition}
T(u)=\begin{pmatrix}
       1 & 0 & 0 \\
       f_{0,-1}(u) & 1 & 0 \\
       f_{1,-1}(u) & f_{1,0}(u) & 1 \\
     \end{pmatrix}
     \begin{pmatrix}
       k_{-1}(u) &  &  \\
        & k_{0}(u) &  \\
        &  & k_1(u)\\
     \end{pmatrix}
     \begin{pmatrix}
       1 & e_{-1,0}(u) & e_{-1,1}(u) \\
       0 & 1 & e_{01}(u) \\
       0 & 0 & 1 \\
     \end{pmatrix},
\end{align}
where the entries are defined by the matrix decomposition and for $i<j$
\begin{align*}
&e_{ij}(u)=\sum^{\infty}_{r=1}e^{(r)}_{ij}u^{-r}\in Y_R(\mathfrak{so}_3)[[u^{-1}]],\\
&f_{ji}(u)=\sum^{\infty}_{r=1}f^{(r)}_{ji}u^{-r}\in Y_R(\mathfrak{so}_3)[[u^{-1}]],\\
&k_{i}(u)=1+\sum^{\infty}_{r=1}k^{(r)}_{i}u^{-r}\in Y_R(\mathfrak{so}_3)[[u^{-1}]].
\end{align*}
The elements $e_{ij}(u)$, $f_{ji}(u)$, $k_i(u)$ are called the Gauss generators of
$Y_R(\mathfrak{so}_3)$.
\end{theorem}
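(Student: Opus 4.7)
The plan is to verify existence and uniqueness by running the standard Gauss elimination algorithm on $T(u)$, regarded as a $3\times 3$ matrix over the noncommutative ring $Y_R(\mathfrak{so}_3)[[u^{-1}]]$. The crucial observation that makes the algorithm go through in this noncommutative setting is that $t^{(0)}_{ij}=\delta_{ij}$, so $T(u) = I + O(u^{-1})$; in particular every leading principal minor (in the quasideterminantal sense) has constant term $1$, hence is an invertible element of the power series ring.

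For existence, I would extract the Gauss factors by the usual recursive formulas. With rows and columns indexed by $-1,0,1$, first set $k_{-1}(u)=t_{-1,-1}(u)$, $f_{0,-1}(u)=t_{0,-1}(u)\,k_{-1}(u)^{-1}$, $f_{1,-1}(u)=t_{1,-1}(u)\,k_{-1}(u)^{-1}$, $e_{-1,0}(u)=k_{-1}(u)^{-1}t_{-1,0}(u)$, $e_{-1,1}(u)=k_{-1}(u)^{-1}t_{-1,1}(u)$. Pivoting on $(-1,-1)$ reduces the problem to the $2\times 2$ Schur-complement block
\begin{equation*}
\begin{pmatrix} t_{00}(u) & t_{01}(u) \\ t_{10}(u) & t_{11}(u) \end{pmatrix}
- \begin{pmatrix} t_{0,-1}(u) \\ t_{1,-1}(u) \end{pmatrix} k_{-1}(u)^{-1} \begin{pmatrix} t_{-1,0}(u) & t_{-1,1}(u) \end{pmatrix},
\end{equation*}
which again has leading term $I_2$. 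One can therefore pivot on its $(0,0)$-entry to read off $k_0(u)$, $f_{1,0}(u)$, $e_{0,1}(u)$, and finally $k_1(u)$ as the remaining $(1,1)$-entry of the next Schur complement. By construction each $k_i(u)$ has constant term $1$, and each $e_{ij}(u)$, $f_{ji}(u)$ lies in $u^{-1}Y_R(\mathfrak{so}_3)[[u^{-1}]]$, matching the series forms asserted in the statement.

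For uniqueness, suppose $T(u)=FDE=F'D'E'$ with $F,F'$ lower unitriangular, $E,E'$ upper unitriangular, and $D,D'$ diagonal whose entries have constant term $1$ (hence are invertible). Then $(F')^{-1}F\cdot D = D'\cdot E'E^{-1}$; the left-hand side is lower triangular and the right-hand side upper triangular, so both must equal a diagonal matrix. Comparing entries forces $D=D'$, and invertibility of the $k_i$ then yields $F=F'$ and $E=E'$.

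There is no genuine obstacle here: once the pivots are known to be invertible, the whole argument is formal, and invertibility is automatic from the normalization $t^{(0)}_{ij}=\delta_{ij}$. The only point requiring care is the bookkeeping of noncommutative ordering in the quasideterminantal expressions for the $k_i(u)$, but this bookkeeping affects only the explicit formulas for the Gauss generators, not the existence or uniqueness of the decomposition itself.
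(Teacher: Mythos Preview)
Your proposal is correct and follows essentially the same approach as the paper: both obtain the factors by iteratively pivoting as in ordinary matrix algebra, using that $t^{(0)}_{ij}=\delta_{ij}$ makes the pivots invertible, and deduce uniqueness from this invertibility. Your write-up is simply more explicit than the paper's, which compresses the whole argument into a couple of sentences.
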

\begin{proof} The decomposition is obtained formally by the matrix factorization. The matrix entries
in the decomposition can be computed as in the usual matrix algebra, which also provides the definition
for each of the Gauss generators iteratively.
Since $k_{-1}(u)$ is invertible, it is easy to prove that the Gauss-decomposition is unique.

\end{proof}

The commutation relations between the Gauss generators are directly computed from the matrix equation.
\vskip.2in

It follows from theorem \ref{Gauss decth} that
\begin{equation}
T(u)=\begin{pmatrix}
       k_{-1}(u) & k_{-1}(u)e_{-1,0}(u) & k_{-1}(u)e_{-1,1}(u) \\
       f_{0,-1}(u)k_{-1}(u) & k_{0}(u)+f_{0,-1}(u)k_{-1}(u)e_{-1,0}(u) & * \\
       f_{1,-1}(u)k_{-1}(u) & * & * \\
     \end{pmatrix},
\end{equation}
since $k_i(u)'s$ are invertible, we also obtain that
\begin{equation}
T^{-1}(u)=\begin{pmatrix}\label{eq3.3}
       * & * & * \\
       * & k_{0}^{-1}(u)+e_{01}(u)k_{1}^{-1}(u)f_{10}(u) & -e_{01}(u)k_{1}^{-1}(u) \\
       * & -k_{1}^{-1}(u)f_{10}(u) & k_{1}^{-1}(u) \\
     \end{pmatrix}.
\end{equation}
Moreover, from the defining relation (\ref{unitary relation}), we have $T^{t}(u+\frac{1}{2})=T^{-1}(u)$
and
\begin{align}\label{eq3.4}
T^{t}(u+\frac{1}{2})
=\begin{pmatrix}
                             * & * & * \\
                              *& \triangle & k_{-1}(u+\frac{1}{2})e_{-1,0}(u+\frac{1}{2}) \\
                             * & f_{0,-1}(u+\frac{1}{2})k_{-1}(u+\frac{1}{2}) & k_{-1}(u+\frac{1}{2}) \\
                           \end{pmatrix}.
\end{align}
where $\triangle=k_{0}(u+\frac{1}{2})+f_{0,-1}(u+\frac{1}{2})k_{-1}(u+\frac{1}{2})e_{-1,0}(u+\frac{1}{2})$.

Comparing the matrices (\ref{eq3.3}) and (\ref{eq3.4}), we get the following equations
in $Y_R(\mathfrak{so}_3)$:
\begin{equation}\label{eq3.5}
k_{1}^{-1}(u)=k_{-1}(u+\frac{1}{2})
\end{equation}
\begin{equation}\label{eq3.6}
-e_{01}(u)k_{1}^{-1}(u)=k_{-1}(u+\frac{1}{2})e_{-1,0}(u+\frac{1}{2})
\end{equation}

\begin{equation}\label{eq3.7}
-k_{1}^{-1}(u)f_{10}(u)=f_{0,-1}(u+\frac{1}{2})k_{-1}(u+\frac{1}{2})
\end{equation}

\begin{equation}\label{eq3.8}
k_{0}^{-1}(u)+e_{01}(u)k_{1}^{-1}(u)f_{10}(u)=k_{0}(u+\frac{1}{2})+f_{0,-1}(u+\frac{1}{2})k_{-1}(u+\frac{1}{2})e_{-1,0}(u+\frac{1}{2})
\end{equation}

Let's return back to the defining relations (\ref{generating relation1}) to obtain the
commutation relations among $k_{-1}(u),k_0(u)$ and $e_{-1,0}(u),f_{0,-1}(u)$.
\begin{proposition}\label{Prop3.2}
In $Y_R(\mathfrak{so}_3)$, we have
\begin{equation}\label{eq3.9}
[k_{-1}(u),k_{-1}(v)]=0 ,[k_{-1}(u),k_0(v)]=0,
\end{equation}
\begin{equation}\label{eq3.10}
[k_{-1}(u),e_{-1,0}(v)]=\frac{k_{-1}(u)(e_{-1,0}(v)-e_{-1,0}(u))}{u-v},
\end{equation}

\begin{equation}\label{eq3.11}
[k_{-1}(u),f_{0,-1}(v)]=\frac{(f_{0,-1}(u)-f_{0,-1}(v))k_{-1}(u)}{u-v}.
\end{equation}
\end{proposition}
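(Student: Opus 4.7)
The plan is to read off each commutator from the generating relation \eqref{generating relation1} by specializing the four indices $(i,j,k,l)$ and then translating the result via the Gauss decomposition identities $t_{-1,-1}(u)=k_{-1}(u)$, $t_{-1,0}(u)=k_{-1}(u)e_{-1,0}(u)$, $t_{0,-1}(u)=f_{0,-1}(u)k_{-1}(u)$, and $t_{00}(u)=k_0(u)+f_{0,-1}(u)k_{-1}(u)e_{-1,0}(u)$ recorded after Theorem \ref{Gauss decth}.

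First I would verify $[k_{-1}(u),k_{-1}(v)]=0$ by taking $i=j=k=l=-1$. Both Kronecker contributions vanish (each reduces to $\delta_{-1,1}=0$), so only the $A$-type term survives and one obtains $(u-v-1)\,[k_{-1}(u),k_{-1}(v)]=0$, forcing commutativity. The same mechanism --- indices too extreme for the $\pm$-pairing $\delta_{k,-i}$ or $\delta_{l,-j}$ to engage --- drives the next two relations. For $[k_{-1}(u),e_{-1,0}(v)]$ I would take $(i,j,k,l)=(-1,-1,-1,0)$; for $[k_{-1}(u),f_{0,-1}(v)]$ the choice is $(-1,-1,0,-1)$. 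In each case the $Q$-contributions drop out, leaving the pure $A$-type commutator
\begin{equation*}
[t_{-1,-1}(u),t_{-1,0}(v)]=\tfrac{1}{u-v}\bigl(t_{-1,-1}(u)t_{-1,0}(v)-t_{-1,-1}(v)t_{-1,0}(u)\bigr),
\end{equation*}
together with its $t_{0,-1}$ analogue. Substituting the Gauss expressions and using the already established commutativity of $k_{-1}(u)$ with $k_{-1}(v)$, I would peel off the outer $k_{-1}(v)$ factor (on the left for the $e$-relation, on the right for the $f$-relation) by multiplying by $k_{-1}(v)^{-1}$, thereby arriving at the two stated identities \eqref{eq3.10} and \eqref{eq3.11}.

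The last identity $[k_{-1}(u),k_0(v)]=0$ is the main obstacle, since $k_0(v)$ is not a matrix entry of $T(v)$. I would specialize \eqref{generating relation1} at $(i,j,k,l)=(-1,-1,0,0)$ --- both deltas again vanish --- to obtain
\begin{equation*}
[t_{-1,-1}(u),t_{00}(v)]=\tfrac{1}{u-v}\bigl(t_{0,-1}(u)t_{-1,0}(v)-t_{0,-1}(v)t_{-1,0}(u)\bigr),
\end{equation*}
then write $[k_{-1}(u),t_{00}(v)]=[k_{-1}(u),k_0(v)]+[k_{-1}(u),f_{0,-1}(v)k_{-1}(v)e_{-1,0}(v)]$ and expand the second term by the Leibniz rule using the three commutators already established. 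Thanks to $[k_{-1}(u),k_{-1}(v)]=0$ the two cross-contributions $f_{0,-1}(v)k_{-1}(u)k_{-1}(v)e_{-1,0}(v)$ cancel against $f_{0,-1}(v)k_{-1}(v)k_{-1}(u)e_{-1,0}(v)$, and the surviving terms reproduce exactly the right-hand side obtained from RTT. The difference is therefore $[k_{-1}(u),k_0(v)]=0$. Everything else is mechanical; the only delicate step is this final bookkeeping.
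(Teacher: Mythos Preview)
Your proposal is correct and follows the same strategy as the paper: specialize \eqref{generating relation1} at the appropriate indices and translate through the Gauss decomposition. The paper only spells out the case of \eqref{eq3.10} and dismisses the rest with ``the other relations are proved in the same way''; you supply more detail, in particular the Leibniz expansion and cancellation needed for $[k_{-1}(u),k_0(v)]=0$, which the paper does not write out but which indeed goes through exactly as you describe.
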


\begin{proof} We just show (\ref{eq3.10}) as the other relations are proved in the same way.
It follows from equation (\ref{generating relation1}) that
\begin{equation*}
[t_{-1,-1}(u),t_{-1,0}(v)]=\frac{1}{u-v}(t_{-1,-1}(u)t_{-1,0}(v)-t_{-1,-1}(v)t_{-1,0}(u))
\end{equation*}
i.e. $[k_{-1}(u),k_{-1}(v)e_{-1,0}(v)]=\frac{1}{u-v}(k_{-1}(u)k_{-1}(v)e_{-1,0}(v)
-k_{-1}(v)k_{-1}(u)e_{-1,0}(u))
$
then formula (\ref{eq3.10}) follows immediately. Similarly we get Eq. (\ref{eq3.11}).
\end{proof}

In the following proposition, we will give the relations between $e_{-1,0}$ and $f_{0,-1}$.
\begin{proposition}\label{Prop3.3}
\begin{equation}\label{eq3.12}
[e_{-1,0}(u),f_{0,-1}(v)]=\frac{k_{-1}^{-1}(u)k_{0}(u)-k_{-1}^{-1}(v)k_{0}(v)}{u-v}.
\end{equation}
\end{proposition}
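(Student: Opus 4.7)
The plan is to derive (\ref{eq3.12}) from a single specialization of the RTT relation (\ref{generating relation1}), expand everything in terms of the Gauss generators via Theorem \ref{Gauss decth}, and simplify using the commutation relations of Proposition \ref{Prop3.2}. Choosing $i=-1,\; j=0,\; k=0,\; l=-1$ in (\ref{generating relation1}) makes both Kronecker deltas $\delta_{k,-i}=\delta_{0,1}$ and $\delta_{l,-j}=\delta_{-1,0}$ vanish, so the relation reduces cleanly to
\[
[t_{-1,0}(u),\,t_{0,-1}(v)] \;=\; \frac{1}{u-v}\bigl(t_{0,0}(u)\,t_{-1,-1}(v) - t_{0,0}(v)\,t_{-1,-1}(u)\bigr).
\]

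Next I would substitute $t_{-1,-1}(u)=k_{-1}(u)$, $t_{-1,0}(u)=k_{-1}(u)e_{-1,0}(u)$, $t_{0,-1}(u)=f_{0,-1}(u)k_{-1}(u)$, and $t_{0,0}(u)=k_0(u)+f_{0,-1}(u)k_{-1}(u)e_{-1,0}(u)$. On the right hand side the $k_0$-contribution produces $k_0(u)k_{-1}(v)-k_0(v)k_{-1}(u)$ after invoking (\ref{eq3.9}); the remaining contribution is $f_{0,-1}(u)k_{-1}(u)e_{-1,0}(u)k_{-1}(v)-f_{0,-1}(v)k_{-1}(v)e_{-1,0}(v)k_{-1}(u)$, whose second summand I would reshape using (\ref{eq3.10}) to push $k_{-1}(u)$ through $e_{-1,0}(v)$. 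On the left hand side I would first introduce the desired bracket by writing $e_{-1,0}(u)f_{0,-1}(v)=f_{0,-1}(v)e_{-1,0}(u)+[e_{-1,0}(u),f_{0,-1}(v)]$, then apply (\ref{eq3.10})--(\ref{eq3.11}) to move $k_{-1}(u)$ past $f_{0,-1}(v)$ and $k_{-1}(v)$ past $e_{-1,0}(u)$. Each side then produces a family of cross terms of the uniform shape $(u-v)^{-1}\,f_{0,-1}(\ast)k_{-1}(u)k_{-1}(v)e_{-1,0}(\ast)$; these match in pairs and cancel, leaving
\[
(u-v)\,k_{-1}(u)\,[e_{-1,0}(u),\,f_{0,-1}(v)]\,k_{-1}(v) \;=\; k_0(u)k_{-1}(v) - k_0(v)k_{-1}(u).
\]
Left-multiplying by $k_{-1}^{-1}(u)$ and right-multiplying by $k_{-1}^{-1}(v)$, and again using $[k_{-1}(u),k_0(v)]=0$ to push the inverses past $k_0(v)$, yields the stated formula.

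The main obstacle is the bookkeeping of the cross terms. Each use of (\ref{eq3.10}) or (\ref{eq3.11}) introduces a factor $(u-v)^{-1}$, and a careless order of swaps easily spawns spurious $(u-v)^{-2}$ double poles that must then re-combine to zero. The cleanest way to avoid this is to push the two $k_{-1}$-factors to the outside first, exploiting that they commute with each other and with $k_0$; with that ordering every correction term generated has the same denominator $u-v$ and the same monomial structure, so the cancellations between the reshaped left and right hand sides are visible by inspection rather than requiring a delicate resummation.
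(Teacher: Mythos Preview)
Your proposal is correct and follows essentially the same route as the paper: both start from the specialization $[t_{-1,0}(u),t_{0,-1}(v)]$ of (\ref{generating relation1}) (the paper's equation (\ref{eq3.15})) and reduce via the commutation relations of Proposition~\ref{Prop3.2}. The only cosmetic difference is that the paper first packages the effect of (\ref{eq3.10})--(\ref{eq3.11}) on the term $f_{0,-1}(v)k_{-1}(v)k_{-1}(u)e_{-1,0}(u)$ into a standalone identity (their (\ref{eq3.14})), so that upon substitution into (\ref{eq3.15}) the cross terms visibly cancel in one step, whereas you describe doing the same swaps inline; the underlying computation is identical.
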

\vskip.1in
\begin{proof} Since $t_{-1, 0}(u)=k_{-1}(u)e_{-1,0}(u)$ and $t_{-1, -1}(u)=k_{-1}(v)$,
relation (\ref{generating relation1}) implies that
\begin{align*}
&k_{-1}(u)e_{-1,0}(u)k_{-1}(v)-k_{-1}(v)k_{-1}(u)e_{-1,0}(u)=\\
&\frac{1}{u-v}(k_{-1}(u)e_{-1,0}(u)k_{-1}(v)-k_{-1}(v)e_{-1,0}(v)k_{-1}(u)).\\
\end{align*}

Thus, we have
\begin{equation}\label{eq3.13}
\begin{aligned}
-f_{0,-1}(v)k_{-1}(v)k_{-1}(u)e_{-1,0}(u)=
-\frac{u-v-1}{u-v}f_{0,-1}(v)k_{-1}(u)e_{-1,0}(u)k_{-1}(v)\\
-\frac{1}{u-v}f_{0,-1}(v)k_{-1}(v)e_{-1,0}(v)k_{-1}(u)
\end{aligned}
\end{equation}

Using the equation (\ref{eq3.11}), we obtain
\begin{align*}
f_{0,-1}(v)k_{-1}(u)=\frac{u-v}{u-v-1}k_{-1}(u)f_{0,-1}(v)-\frac{1}{u-v-1}f_{0,-1}(u)k_{-1}(u).
\end{align*}
Then from the equation (\ref{eq3.13}), we have
\begin{equation}\label{eq3.14}
\begin{aligned}
-f_{0,-1}(v)k_{-1}(v)k_{-1}(u)e_{-1,0}(u)=-k_{-1}(u)f_{0,-1}(v)e_{-1,0}(u)k_{-1}(v)\\
+\frac{1}{u-v}f_{0,-1}(u)k_{-1}(u)e_{-1,0}(u)k_{-1}(v)\\
-\frac{1}{u-v}f_{0,-1}(v)k_{-1}(v)e_{-1,0}(v)k_{-1}(u)
\end{aligned}
\end{equation}
Similarly using $t_{0,-1}(u)=k_{-1}(u)e_{-1,0}(u)$ and $t_{-1, 0}(u)=f_{0,-1}(u)k_{-1}(u)$
we get the following from equation (\ref{generating relation1}):
\begin{equation}\label{eq3.15}
\begin{aligned}
k_{-1}(u)e_{-1,0}(u)f_{0,-1}(v)k_{-1}(v)-f_{0,-1}(v)k_{-1}(v)k_{-1}(u)e_{-1,0}(u)=\\
\frac{1}{u-v}(k_0(u)k_{-1}(v)-k_{0}(v)k_{-1}(u))\\
+\frac{1}{u-v}f_{0,-1}(u)k_{-1}(u)e_{-1,0}(u)k_{-1}(v)\\
-\frac{1}{u-v}f_{0,-1}(v)k_{-1}(v)e_{-1,0}(v)k_{-1}(u).\\
\end{aligned}
\end{equation}

Plugging Eq. (\ref{eq3.14}) into Eq. (\ref{eq3.15}), we have
\begin{equation}\label{eq3.16}
\begin{aligned}
k_{-1}(u)e_{-1,0}(u)f_{0,-1}(v)k_{-1}(v)-k_{-1}(u)f_{0,-1}(v)e_{-1,0}(u)k_{-1}(v)=\\
\frac{1}{u-v}(k_0(u)k_{-1}(v)-k_{0}(v)k_{-1}(u)).
\end{aligned}
\end{equation}
Since the $k_{i}(u)'s$ are invertible, we get Eq. (\ref{eq3.12}).
\end{proof}

The following relations between $e_{-1,0}(u)$ and $e_{01}(u)$ ($f_{0,-1}(u)$ and $f_{10}(u)$) will be useful.

\begin{proposition}\label{Prop3.4} In the algebra $Y_R(\mathfrak{so}_3)$
\begin{equation}\label{eq3.17}
e_{01}(u)=-e_{-1,0}(u-\frac{1}{2})
\end{equation}
\begin{equation}\label{eq3.18}
f_{10}(u)=-f_{0,-1}(u-\frac{1}{2})
\end{equation}
\end{proposition}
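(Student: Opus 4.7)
The plan is to extract the two identities directly from the already-derived relations (\ref{eq3.5})--(\ref{eq3.7}) combined with the commutation relations (\ref{eq3.10})--(\ref{eq3.11}). Since $k_{-1}(v)$ is invertible, the strategy is to produce an intertwining relation that lets me cancel a factor of $k_{-1}$ from both sides and thereby identify $e_{01}(u)$ with $-e_{-1,0}(u-\tfrac{1}{2})$ and $f_{10}(u)$ with $-f_{0,-1}(u-\tfrac{1}{2})$.

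First, combining (\ref{eq3.5}) with (\ref{eq3.6}) yields $e_{01}(u)\,k_{-1}(u+\tfrac{1}{2}) = -k_{-1}(u+\tfrac{1}{2})\,e_{-1,0}(u+\tfrac{1}{2})$, and combining (\ref{eq3.5}) with (\ref{eq3.7}) gives $k_{-1}(u+\tfrac{1}{2})\,f_{10}(u) = -f_{0,-1}(u+\tfrac{1}{2})\,k_{-1}(u+\tfrac{1}{2})$. Hence it suffices to establish the two intertwining identities
\begin{align*}
k_{-1}(u+\tfrac{1}{2})\,e_{-1,0}(u+\tfrac{1}{2}) &= e_{-1,0}(u-\tfrac{1}{2})\,k_{-1}(u+\tfrac{1}{2}),\\
k_{-1}(u+\tfrac{1}{2})\,f_{0,-1}(u-\tfrac{1}{2}) &= f_{0,-1}(u+\tfrac{1}{2})\,k_{-1}(u+\tfrac{1}{2}),
\end{align*}
and then cancel $k_{-1}(u+\tfrac{1}{2})$ on the relevant side of each resulting equation.

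To obtain the first intertwining I clear the denominator in (\ref{eq3.10}) and rearrange, arriving at the polynomial identity
\begin{equation*}
(u-v-1)\,k_{-1}(u)\,e_{-1,0}(v) = (u-v)\,e_{-1,0}(v)\,k_{-1}(u) - k_{-1}(u)\,e_{-1,0}(u).
\end{equation*}
Specializing $u \mapsto u+\tfrac{1}{2}$ and $v \mapsto u-\tfrac{1}{2}$ kills the coefficient $(u-v-1) = 0$ and leaves exactly the stated intertwining. The same manipulation applied to (\ref{eq3.11}) gives the $f$-side identity after the analogous specialization.

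The main obstacle is essentially bookkeeping: one must handle the formal-series shifts $u \pm \tfrac{1}{2}$ carefully --- they are legitimate substitutions because every generating series in sight lies in $Y_R(\mathfrak{so}_3)[[u^{-1}]]$ and $(u \pm \tfrac{1}{2})^{-r}$ expands as a well-defined formal power series in $u^{-1}$ --- and recognize that the factor $(u-v-1)$ appearing implicitly in (\ref{eq3.10}) and (\ref{eq3.11}) after clearing denominators is precisely what makes the specialization $u-v=1$ collapse the commutation relation into a clean intertwining with shifted spectral parameter. No further R-matrix computation or additional RTT relation is required.
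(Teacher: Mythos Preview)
Your proof is correct and follows essentially the same route as the paper: derive the intertwining identity $k_{-1}(u+\tfrac12)\,e_{-1,0}(u+\tfrac12)=e_{-1,0}(u-\tfrac12)\,k_{-1}(u+\tfrac12)$ from (\ref{eq3.10}) (this is exactly the paper's Eq.~(\ref{eq3.19})), combine it with (\ref{eq3.5})--(\ref{eq3.6}), and cancel $k_{-1}(u+\tfrac12)$, with the $f$-side handled symmetrically via (\ref{eq3.11}) and (\ref{eq3.7}). Your write-up is in fact more explicit than the paper's, which only says ``from Eq.~(\ref{eq3.10}) we easily get'' the intertwining; your denominator-clearing and specialization $u-v=1$ make transparent why that step works.
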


\begin{proof} From Eq. (\ref{eq3.10}), we easily get
\begin{equation}\label{eq3.19}
k_{-1}(u+\frac{1}{2})e_{-1,0}(u+\frac{1}{2})=e_{-1,0}(u-\frac{1}{2})k_{-1}(u+\frac{1}{2}).
\end{equation}
Taking the account of (\ref{eq3.5}) and (\ref{eq3.6}), we have $e_{01}(u)=-e_{-1,0}(u-\frac{1}{2})$.
Eq. (\ref{eq3.18}) is proved similarly.
\end{proof}

Now we can give the relations between $k_0(u)$ and $k_{-1}(u)$.
\begin{proposition}\label{Prop3.5}
In $Y_R(\mathfrak{so}_3)$, we have
\begin{equation}\label{eq3.20}
k_0(u)=k_{-1}(u)k^{-1}_{-1}(u+\frac{1}{2})
\end{equation}
\end{proposition}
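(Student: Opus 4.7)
My plan is to transform identity (\ref{eq3.8}) into a functional equation relating $k_0(u)$ and $k_{-1}(u)$, and then solve that equation by the uniqueness of its power series solution.

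First I rewrite the cross-term of (\ref{eq3.8}): Proposition \ref{Prop3.4} together with (\ref{eq3.5}) gives $e_{01}(u)k_1^{-1}(u)f_{10}(u) = e_{-1,0}(u-\tfrac{1}{2})\,k_{-1}(u+\tfrac{1}{2})\,f_{0,-1}(u-\tfrac{1}{2})$. Specializing (\ref{eq3.10}) and (\ref{eq3.11}) at $v=u-1$ yields the commutation identities $k_{-1}(u+\tfrac{1}{2})e_{-1,0}(u+\tfrac{1}{2})=e_{-1,0}(u-\tfrac{1}{2})k_{-1}(u+\tfrac{1}{2})$ and $k_{-1}(u+\tfrac{1}{2})f_{0,-1}(u-\tfrac{1}{2})=f_{0,-1}(u+\tfrac{1}{2})k_{-1}(u+\tfrac{1}{2})$. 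Applying the first to the left side and the second to the right side of (\ref{eq3.8}) collects the $e$-$f$ terms into a single commutator,
\begin{align*}
k_0^{-1}(u)-k_0(u+\tfrac{1}{2})=-k_{-1}(u+\tfrac{1}{2})\bigl[e_{-1,0}(u+\tfrac{1}{2}),\,f_{0,-1}(u-\tfrac{1}{2})\bigr].
\end{align*}
Evaluating this commutator via Proposition \ref{Prop3.3} at the pair $(u+\tfrac{1}{2},u-\tfrac{1}{2})$ (whose difference is $1$), canceling the resulting copy of $k_0(u+\tfrac{1}{2})$, and exploiting the commutativity of the $k$-generators (Proposition \ref{Prop3.2}), I arrive at the functional equation
\begin{align*}
k_0(u)\,k_0(u-\tfrac{1}{2}) = k_{-1}(u-\tfrac{1}{2})\,k_{-1}^{-1}(u+\tfrac{1}{2}).
\end{align*}

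To close the argument, I set $\phi(u):=k_0(u)\,k_{-1}^{-1}(u)\,k_{-1}(u+\tfrac{1}{2})\in 1+u^{-1}Y_R(\mathfrak{so}_3)[[u^{-1}]]$. Commutativity of the $k$-generators rewrites the previous equation as $\phi(u)\phi(u-\tfrac{1}{2}) = 1$. Writing $\phi(u)=1+\sum_{r\ge 1}\phi_r u^{-r}$ and expanding $\phi(u-\tfrac{1}{2})$ in $u^{-1}$, the coefficient of $u^{-n}$ in the product $\phi(u)\phi(u-\tfrac{1}{2})$ has the shape $2\phi_n + P_n(\phi_1,\dots,\phi_{n-1})$; vanishing of that coefficient together with induction on $n$ forces $\phi_n=0$ for every $n\ge 1$, hence $\phi(u)=1$. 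Rearranging this (using commutativity once more) gives $k_0(u)=k_{-1}(u)k_{-1}^{-1}(u+\tfrac{1}{2})$, which is (\ref{eq3.20}).

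The main obstacle I anticipate is the bookkeeping in the first half of the argument: juggling the shifted arguments $u$ and $u\pm\tfrac{1}{2}$ and repeatedly commuting $k_{-1}$ past $e_{-1,0}$ and $f_{0,-1}$ at compatible arguments requires care to ensure that both sides of (\ref{eq3.8}) end up in a common form suitable for the commutator collapse. Once the functional equation is obtained, the power-series uniqueness step is entirely routine.
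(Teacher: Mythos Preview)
Your proof is correct and follows essentially the same route as the paper: both reduce (\ref{eq3.8}) via (\ref{eq3.5}), Proposition~\ref{Prop3.4} and the specialized commutation relations (\ref{eq3.10})--(\ref{eq3.11}) to a single $[e,f]$-commutator, evaluate it with Proposition~\ref{Prop3.3}, and arrive at the functional equation $k_0(u)k_0(u-\tfrac12)=k_{-1}(u-\tfrac12)k_{-1}^{-1}(u+\tfrac12)$, which is then solved by uniqueness of formal power series. The only differences are cosmetic (the paper pulls $k_{-1}(u+\tfrac12)$ to the right and obtains $[e_{-1,0}(u-\tfrac12),f_{0,-1}(u+\tfrac12)]$ rather than your $[e_{-1,0}(u+\tfrac12),f_{0,-1}(u-\tfrac12)]$), and your explicit $\phi(u)\phi(u-\tfrac12)=1$ induction spells out what the paper summarizes in one sentence.
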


\begin{proof}
If follows from Eq. (\ref{eq3.8}) that
\begin{equation*}
k_{0}(u+\frac{1}{2})-k_{0}^{-1}(u)=e_{01}(u)k_{1}^{-1}(u)f_{10}(u)-f_{0,-1}(u+\frac{1}{2})k_{-1}(u+\frac{1}{2})e_{-1,0}(u+\frac{1}{2}).
\end{equation*}
Moreover, using the equations (\ref{eq3.7}), (\ref{eq3.17}) and (\ref{eq3.19}), we can get
\begin{equation*}
k_{0}(u+\frac{1}{2})-k^{-1}_{0}(u)=[e_{-1,0}(u-\frac{1}{2}),f_{0,-1}(u+\frac{1}{2})]k_{-1}(u+\frac{1}{2}).
\end{equation*}

Proposition \ref{Prop3.3} implies that
\begin{equation*}
[e_{-1,0}(u-\frac{1}{2}),f_{0,-1}(u+\frac{1}{2})]=-(k_{-1}^{-1}(u-\frac{1}{2})k_{0}(u-\frac{1}{2})-k_{-1}^{-1}(u+\frac{1}{2})k_{0}(u+\frac{1}{2})),
\end{equation*}
thus,
\begin{equation*}
k^{-1}_{0}(u)=k_{-1}^{-1}(u-\frac{1}{2})k_{0}(u-\frac{1}{2})k_{-1}(u+\frac{1}{2}),
\end{equation*}
which is equivalent to
\begin{equation*}
k_{0}(u-\frac{1}{2})k_{0}(u)=k_{-1}(u-\frac{1}{2})k^{-1}_{-1}(u)k_{-1}(u)k^{-1}_{-1}(u+\frac{1}{2}).
\end{equation*}
Since $k_i(u)'s$ are formal series, we get
\begin{equation*}
k_{0}(u)=k_{-1}(u)k^{-1}_{-1}(u+\frac{1}{2}).
\end{equation*}

\end{proof}

\begin{proposition}\label{prop3.6}
In $Y_R(\mathfrak{so}_3)$, we have
\begin{equation}\label{eq3.21}
[k^{-1}_{-1}(u)k_{0}(u),e_{-1,0}(v)]=\frac{1}{2}\frac{1}{u-v}\{k^{-1}_{-1}(u)k_{0}(u),e_{-1,0}(u)-e_{-1,0}(v)\}
\end{equation}
\begin{equation}\label{eq3.22}
[k^{-1}_{-1}(u)k_{0}(u),f_{0,-1}(v)]=-\frac{1}{2}\frac{1}{u-v}\{k^{-1}_{-1}(u)k_{0}(u),f_{0,-1}(u)-f_{0,-1}(v)\}
\end{equation}
\end{proposition}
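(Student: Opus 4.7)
The plan is to collapse $k_{-1}^{-1}(u)k_0(u)$ to a single Gauss factor via Proposition~\ref{Prop3.5}, and then combine the basic commutation (\ref{eq3.10}) with the shift relation (\ref{eq3.19}) to obtain explicit forms for both sides of (\ref{eq3.21}) that can be matched.

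First, Proposition~\ref{Prop3.5} together with (\ref{eq3.9}) gives $k_{-1}^{-1}(u)k_{0}(u)=k_{-1}^{-1}(u+\tfrac12)$; denote this common element by $h(u)$. Writing (\ref{eq3.10}) with a generic parameter $w$ and substituting $k_{-1}(w)e_{-1,0}(w)=e_{-1,0}(w-1)k_{-1}(w)$, which is (\ref{eq3.19}) rewritten, produces the exchange formula
\[
k_{-1}(w)\,e_{-1,0}(v)=\frac{(w-v)\,e_{-1,0}(v)-e_{-1,0}(w-1)}{w-v-1}\,k_{-1}(w).
\]
Setting $w=u+\tfrac12$ and reading off the commutator,
\[
[h(u),e_{-1,0}(v)]=\frac{h(u)\bigl(e_{-1,0}(u-\tfrac12)-e_{-1,0}(v)\bigr)}{u-v-\tfrac12},
\]
while specialising $v=u$ uses $w-v=\tfrac12$ and $w-v-1=-\tfrac12$ to collapse into the anticommutator identity
\[
\{h(u),e_{-1,0}(u)\}=2\,h(u)\,e_{-1,0}(u-\tfrac12).
\]

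Using $\{h,e_{-1,0}(v)\}=2e_{-1,0}(v)h+[h,e_{-1,0}(v)]$ and expanding $\{h(u),e_{-1,0}(u)-e_{-1,0}(v)\}$ via these two pieces, a brief simplification collapses the result into $h(u)(e_{-1,0}(u-\tfrac12)-e_{-1,0}(v))\cdot\frac{2(u-v)}{u-v-1/2}$; dividing by $2(u-v)$ reproduces the commutator just computed, yielding (\ref{eq3.21}). The identity (\ref{eq3.22}) follows by the same strategy, now using (\ref{eq3.11}) and the $f$-analogue of the shift relation $k_{-1}(w)f_{0,-1}(w-1)=f_{0,-1}(w)k_{-1}(w)$, which is read off from (\ref{eq3.5}), (\ref{eq3.7}) and (\ref{eq3.18}); the opposite sign between (\ref{eq3.10}) and (\ref{eq3.11}) accounts for the overall minus sign in (\ref{eq3.22}).

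The main obstacle is isolating the role of the shift identity (\ref{eq3.19}). Without it the exchange formula would not factor cleanly as $(\cdot)\,k_{-1}(w)$, and the specialisation $v=u$ would not yield an anticommutator of the required shape. It is precisely the shift $e_{-1,0}(w)\leftrightarrow e_{-1,0}(w-1)$ encoded in $\{h(u),e_{-1,0}(u)\}=2h(u)e_{-1,0}(u-\tfrac12)$ that lets both sides of (\ref{eq3.21}) agree and that fixes the factor $\tfrac12$ on the right-hand sides of (\ref{eq3.21})--(\ref{eq3.22}).
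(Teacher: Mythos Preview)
Your proof is correct and follows essentially the same route as the paper: both reduce $k_{-1}^{-1}(u)k_0(u)$ to $k_{-1}^{-1}(u+\tfrac12)$ via Proposition~\ref{Prop3.5} and then use consequences of (\ref{eq3.10}) to match the two sides. The only cosmetic difference is that you factor $h(u)$ on the left and work with $e_{-1,0}(u-\tfrac12)$ (via the shift relation (\ref{eq3.19})), whereas the paper factors $h(u)$ on the right and works with $e_{-1,0}(u+\tfrac12)$, deriving the analogous exchange relations directly from (\ref{eq3.10}); the two computations are mirror images of each other.
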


\begin{proof} Since two formulas are treated similarly,
we only prove the formula (\ref{eq3.21}).
From proposition \ref{Prop3.5} we obtain $k^{-1}_{-1}(u)k_{0}(u)=k^{-1}_{-1}(u+\frac{1}{2})$. Therefore
\begin{equation*}
[k^{-1}_{-1}(u)k_{0}(u),e_{-1,0}(v)]=[k^{-1}_{-1}(u+\frac{1}{2}),e_{-1,0}(v)].
\end{equation*}
Furthermore, from Eq. (\ref{eq3.10}), we get
\begin{equation*}
[k^{-1}_{-1}(u+\frac{1}{2}),e_{-1,0}(v)]=\frac{1}{u-v+\frac{1}{2}}(e_{-1,0}(u+\frac{1}{2})-e_{-1,0}(v))k^{-1}_{-1}(u+\frac{1}{2}).
\end{equation*}
Now the proposition will follow if the following is true.
\begin{equation*}
\begin{aligned}
\frac{1}{u-v+\frac{1}{2}}(e_{-1,0}(u+\frac{1}{2})-e_{-1,0}(v))k^{-1}_{-1}(u+\frac{1}{2})\\
=\frac{1}{2}\frac{1}{u-v}\{k^{-1}_{-1}(u+\frac{1}{2}),e_{-1,0}(u)-e_{-1,0}(v))\}
\end{aligned}
\end{equation*}
In fact, using Eq. (\ref{eq3.10}) one has
\begin{equation*}
k^{-1}_{-1}(u+\frac{1}{2})e_{-1,0}(u)=2e_{-1,0}(u+\frac{1}{2})k^{-1}_{-1}(u+\frac{1}{2})-e_{-1,0}(u)k^{-1}_{-1}(u+\frac{1}{2}),
\end{equation*}

\begin{equation*}
k^{-1}_{-1}(u+\frac{1}{2})e_{-1,0}(v)=\frac{1}{u-v+\frac{1}{2}}e_{-1,0}(u+\frac{1}{2})k^{-1}_{-1}(u+\frac{1}{2})+
\frac{u-v-\frac{1}{2}}{u-v+\frac{1}{2}}e_{-1,0}(v)k^{-1}_{-1}(u+\frac{1}{2}).
\end{equation*}
Hence, we can obtain the difference of the above two equations
\begin{equation*}
\begin{aligned}
&k^{-1}_{-1}(u+\frac{1}{2})(e_{-1,0}(u)-e_{-1,0}(v))=\\
&\frac{2(u-v)}{u-v+\frac{1}{2}}e_{-1,0}(u+\frac{1}{2})k^{-1}_{-1}(u+\frac{1}{2})
-e_{-1,0}(u)k^{-1}_{-1}(u+\frac{1}{2})\\
&-\frac{u-v-\frac{1}{2}}{u-v+\frac{1}{2}}e_{-1,0}(v)k^{-1}_{-1}(u+\frac{1}{2}).
\end{aligned}
\end{equation*}
Thus, we get
\begin{equation*}
\{k^{-1}_{-1}(u+\frac{1}{2}),e_{-1,0}(u)-e_{-1,0}(v)\}=\frac{2(u-v)}{u-v+\frac{1}{2}}(e_{-1,0}(u+\frac{1}{2})-e_{-1,0}(v))k^{-1}_{-1}(u+\frac{1}{2}),
\end{equation*}
which is just we need.
\end{proof}

\begin{proposition}\label{prop3.7}
In $Y_R(\mathfrak{so}_3)$, we have
\begin{equation}\label{eq3.23}
[e_{-1,0}(u),e_{-1,0}(v)]=\frac{1}{2}\frac{(e_{-1,0}(u)-e_{-1,0}(v))^2}{u-v},
\end{equation}

\begin{equation}\label{eq3.24}
[f_{0,-1}(u),f_{0,-1}(v)]=-\frac{1}{2}\frac{(f_{0,-1}(u)-f_{0,-1}(v))^2}{u-v}.
\end{equation}
\end{proposition}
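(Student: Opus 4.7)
The plan is to apply the RTT relation (\ref{generating relation1}) with two carefully chosen sets of indices, combine them so that the triple sum collapses into a single commutator, expand the result using the Gauss decomposition, and then eliminate the unknown generator $e_{-1,1}$ by means of the unitarity relation (\ref{unitary relation}). The analogous strategy handles (\ref{eq3.24}).

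First, I would specialize (\ref{generating relation1}) to $i=k=-1$, $j=l=0$. Here $\delta_{k,-i}=0$ while $\delta_{l,-j}=1$, giving
\begin{equation*}
\frac{u-v-1}{u-v}\,[t_{-1,0}(u), t_{-1,0}(v)] = -\frac{1}{u-v-\frac{1}{2}}\, S(u,v),
\end{equation*}
where $S(u,v)=\sum_{p=-1}^{1} t_{-1,-p}(v)\,t_{-1,p}(u)$. Next, specialize (\ref{generating relation1}) to $i=j=-1$, $k=-1$, $l=1$, where now $\delta_{l,-j}=1$ contributes the same sum $S(u,v)$. Solving the resulting relation for $S(u,v)$ and substituting back produces the clean identity
\begin{equation*}
[t_{-1,0}(u), t_{-1,0}(v)] = -[t_{-1,-1}(u), t_{-1,1}(v)].
\end{equation*}

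Second, I would expand both sides with the Gauss decomposition. Writing $t_{-1,j}(u)=k_{-1}(u)\,e_{-1,j}(u)$ (with the convention $e_{-1,-1}=1$) and using Proposition \ref{Prop3.2} together with the commutation relation (\ref{eq3.10}) to sweep all $k_{-1}$'s to the left, the LHS becomes
\begin{equation*}
k_{-1}(u)k_{-1}(v)\Bigl\{[e_{-1,0}(u), e_{-1,0}(v)] - \tfrac{(e_{-1,0}(u)-e_{-1,0}(v))^{2}}{u-v}\Bigr\},
\end{equation*}
while the RHS becomes $-k_{-1}(v)[k_{-1}(u), e_{-1,1}(v)]$.

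Third, to close the loop I would use the unitary relation at the $(-1,1)$-entry of $T(u)T^{t}(u+\tfrac{1}{2})=1$, namely
\begin{equation*}
t_{-1,-1}(u)\,t_{-1,1}(u+\tfrac{1}{2}) + t_{-1,0}(u)\,t_{-1,0}(u+\tfrac{1}{2}) + t_{-1,1}(u)\,t_{-1,-1}(u+\tfrac{1}{2}) = 0.
\end{equation*}
After substituting the Gauss decomposition, moving every $k_{-1}$ past the $e_{-1,0}$'s with (\ref{eq3.10}), and using invertibility of $k_{-1}$, this yields a functional equation expressing $e_{-1,1}(u)+e_{-1,1}(u+\tfrac{1}{2})$ as a quadratic polynomial in $e_{-1,0}$ at $u$ and $u+\tfrac{1}{2}$. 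Combined with the identity for the LHS above, this forces $[k_{-1}(u), e_{-1,1}(v)]$ to equal $k_{-1}(u)\bigl(\tfrac{(e_{-1,0}(u)-e_{-1,0}(v))^{2}}{u-v}-[e_{-1,0}(u),e_{-1,0}(v)]\bigr)$, and substituting back collapses everything to (\ref{eq3.23}) after cancelling the invertible factor $k_{-1}(u)k_{-1}(v)$. The proof of (\ref{eq3.24}) proceeds in exactly the same way, starting from (\ref{generating relation1}) at $i=k=0$, $j=l=-1$, using $t_{0,-1}(u)=f_{0,-1}(u)k_{-1}(u)$ and the $(1,-1)$-entry of the unitarity relation.

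The main obstacle I anticipate is the elimination of $e_{-1,1}$. Because the unitarity condition mixes arguments $u$ and $u+\tfrac{1}{2}$, one must treat the resulting recursion in the formal power series topology rather than inverting directly, and one must keep careful track of the commutator corrections between $k_{-1}$ at shifted arguments and $e_{-1,1}$. The key numerical coincidence that makes the argument work is that the pole $u-v=\tfrac{1}{2}$ of the R-matrix exactly matches the shift $\kappa=\tfrac{1}{2}$ in the unitarity relation, and this is the source of the factor $\tfrac{1}{2}$ on the right-hand sides of (\ref{eq3.23}) and (\ref{eq3.24}).
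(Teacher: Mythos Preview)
Your overall strategy --- compare two specializations of (\ref{generating relation1}) sharing the same $Q$-term sum, rewrite via the Gauss decomposition, then eliminate $e_{-1,1}$ --- is exactly the paper's strategy. However, there are two genuine problems.

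First, the ``clean identity'' $[t_{-1,0}(u),t_{-1,0}(v)]=-[t_{-1,-1}(u),t_{-1,1}(v)]$ is simply false. If you carry out the elimination of $S(u,v)$ carefully (note also the sign: the $\delta_{l,-j}$ term enters with a $+$, not a $-$), what you actually obtain is
\[
[t_{-1,0}(u),t_{-1,0}(v)]
= t_{-1,-1}(u)t_{-1,1}(v)
+\tfrac{1}{u-v-1}\,t_{-1,-1}(v)t_{-1,1}(u)
-\tfrac{u-v}{u-v-1}\,t_{-1,1}(v)t_{-1,-1}(u),
\]
which is the paper's (\ref{eq3.28}). Your subsequent computation of the RHS as $-k_{-1}(v)[k_{-1}(u),e_{-1,1}(v)]$ therefore does not hold, and the factors of $k_{-1}$ at the two arguments do not peel off as cleanly as you suggest.

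Second, and more seriously, your elimination of $e_{-1,1}$ is not an argument. The $(-1,1)$-entry of the unitarity relation only gives a recursion linking $e_{-1,1}(u)$ and $e_{-1,1}(u+\tfrac{1}{2})$; it says nothing about $[k_{-1}(u),e_{-1,1}(v)]$ for independent $u,v$, and there is no mechanism by which it ``forces'' that commutator to take a specific value. The paper resolves this by first proving outright that
\[
e_{-1,1}(u)=-\tfrac{1}{2}\,e_{-1,0}^{2}(u),
\]
and this requires three separate lemmas: one from $[t_{-1,-1}(u),t_{-1,1}(u+\tfrac{1}{2})]$ (your unitarity-type input), one from $[t_{-1,0}(u),t^{(1)}_{01}]$, and crucially one from the relation (\ref{generating relation2}) involving $T^{-1}$. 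Only the combination of all three pins down $e_{-1,1}$; the unitarity relation alone is a two-term recursion with no uniqueness. Once $e_{-1,1}=-\tfrac{1}{2}e_{-1,0}^{2}$ is known, substituting into (\ref{eq3.28}) and pushing $k_{-1}$'s left with (\ref{eq3.10}) gives (\ref{eq3.23}) directly.
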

Before proving Proposition \ref{prop3.7}, we first derive the following useful result.
\begin{lemma}
In $Y_R(\mathfrak{so}_3)$ one has
\begin{equation*}
3e_{-1,1}(u+\frac{1}{2})-e_{-1,1}(u)+3e_{-1,0}(u+\frac{1}{2})e_{-1,0}(u)-2e^{2}_{-1,0}(u)=0.
\end{equation*}

\end{lemma}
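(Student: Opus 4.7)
The plan is to combine three identities derived from the RTT presentation (\ref{generating relation1}) and the unitarity relation $T(u) T^t(u+\tfrac{1}{2}) = 1$, all specialized at $v=u+\tfrac{1}{2}$, and then eliminate the surviving $k_{-1}$-factors. The essential algebraic tool throughout is Proposition \ref{Prop3.2}; the two instances of (\ref{eq3.10}) needed below are
\begin{align*}
e_{-1,0}(u+\tfrac{1}{2}) k_{-1}(u) &= k_{-1}(u)\bigl[3 e_{-1,0}(u+\tfrac{1}{2}) - 2 e_{-1,0}(u)\bigr],\\
e_{-1,0}(u) k_{-1}(u+\tfrac{1}{2}) &= k_{-1}(u+\tfrac{1}{2})\bigl[2 e_{-1,0}(u+\tfrac{1}{2}) - e_{-1,0}(u)\bigr].
\end{align*}

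First I would apply (\ref{generating relation1}) at $(i,j,k,l)=(-1,1,-1,-1)$ and specialize $v=u+\tfrac{1}{2}$; neither denominator is singular. Expanding each $t_{ij}$ by Theorem \ref{Gauss decth} and invoking the commutation rules above rewrites the RTT identity as
\[
3 k_{-1}(u) e_{-1,1}(u) k_{-1}(u+\tfrac{1}{2}) + k_{-1}(u) k_{-1}(u+\tfrac{1}{2})\bigl[3 e_{-1,0}(u+\tfrac{1}{2}) e_{-1,0}(u) - 2 e_{-1,0}(u)^2\bigr] = k_{-1}(u+\tfrac{1}{2}) e_{-1,1}(u+\tfrac{1}{2}) k_{-1}(u).
\]
Next, the $(-1,1)$-entry of the unitarity relation, namely $\sum_k t_{-1,k}(u) t_{-1,-k}(u+\tfrac{1}{2})=0$, yields after expansion and use of the commutation rules the one-sided conjugation formula
\[
e_{-1,1}(u) k_{-1}(u+\tfrac{1}{2}) = k_{-1}(u+\tfrac{1}{2})\bigl[-e_{-1,1}(u+\tfrac{1}{2}) + e_{-1,0}(u) e_{-1,0}(u+\tfrac{1}{2}) - 2 e_{-1,0}(u+\tfrac{1}{2})^2\bigr].
\]
A parallel computation starting from (\ref{generating relation1}) at $(i,j,k,l)=(-1,0,-1,0)$ with $v=u+\tfrac{1}{2}$ produces the complementary formula
\[
e_{-1,1}(u+\tfrac{1}{2}) k_{-1}(u) = k_{-1}(u)\bigl[-e_{-1,1}(u) + 3 e_{-1,0}(u) e_{-1,0}(u+\tfrac{1}{2}) + 6 e_{-1,0}(u+\tfrac{1}{2}) e_{-1,0}(u) - 6 e_{-1,0}(u+\tfrac{1}{2})^2 - 4 e_{-1,0}(u)^2\bigr].
\]

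Substituting these two one-sided formulas into the displayed identity, every term acquires the common factor $k_{-1}(u) k_{-1}(u+\tfrac{1}{2})$ (using (\ref{eq3.9})), so dividing by this invertible formal series reduces the identity to a polynomial relation in $e_{-1,0}$ and $e_{-1,1}$ at the arguments $u$ and $u+\tfrac{1}{2}$. The products $e_{-1,0}(u) e_{-1,0}(u+\tfrac{1}{2})$ and $e_{-1,0}(u+\tfrac{1}{2})^2$ cancel in pairs, leaving exactly the stated identity. The main obstacle is the algebraic bookkeeping in the two one-sided formulas: each RTT specialization produces mixed triple products $k_{-1}(u_1) e_{-1,j}(u_2) k_{-1}(u_3)$ that must be reorganized consistently via the commutation rules so that the final substitution telescopes cleanly.
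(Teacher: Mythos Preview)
Your argument is correct: each of the three displayed identities checks out, and after substitution the cross-terms $e_{-1,0}(u)e_{-1,0}(u+\tfrac12)$ and $e_{-1,0}(u+\tfrac12)^2$ indeed cancel, leaving precisely the lemma. However, the route is considerably longer than the paper's. The paper uses only the single RTT relation $[t_{-1,-1}(u),t_{-1,1}(v)]$ specialized at $v=u+\tfrac12$, which collapses (after expanding the commutator) to
\[
3\,t_{-1,-1}(u)\,t_{-1,1}(u+\tfrac12)\;-\;t_{-1,-1}(u+\tfrac12)\,t_{-1,1}(u)\;+\;t_{-1,0}(u+\tfrac12)\,t_{-1,0}(u)\;=\;0.
\]
Because every term here has $t_{-1,-1}=k_{-1}$ sitting to the \emph{left} of the $t_{-1,j}$ factor with $j\ge 0$, passing to Gauss generators places all $k_{-1}$-factors on the left from the outset; a single application of (\ref{eq3.10}) to move $k_{-1}(u)$ past $e_{-1,0}(u+\tfrac12)$ in the middle term suffices to pull out the common factor $k_{-1}(u)k_{-1}(u+\tfrac12)$ and finish. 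Your choice of the equivalent commutator $[t_{-1,1}(u),t_{-1,-1}(v)]$ instead lands $t_{-1,1}=k_{-1}e_{-1,1}$ on the left in some terms, so $e_{-1,1}$ ends up sandwiched between $k_{-1}$-factors; that is what forces you to manufacture the two auxiliary ``one-sided conjugation formulas'' from the unitarity relation and from the $(-1,0,-1,0)$ RTT entry. Both methods land on the same identity, but the paper's ordering avoids ever needing a commutation rule between $k_{-1}$ and $e_{-1,1}$.
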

\begin{proof} First we recall the usual generators from Eq. (\ref{generating relation1}).
\begin{equation*}
\begin{aligned}
&[t_{-1,-1}(u),t_{-1,1}(u+\frac{1}{2})]=-2t_{-1,-1}(u)t_{-1,1}(u+\frac{1}{2})+2t_{-1,-1}(u+\frac{1}{2})t_{-1,1}(u)\\
&-t_{-1,1}(u+\frac{1}{2})t_{-1,-1}(u)-t_{-1,0}(u+\frac{1}{2})t_{-1,0}(u)-t_{-1,-1}(u+\frac{1}{2})t_{-1,1}(u),
\end{aligned}
\end{equation*}
which is actually equivalent to the following equation:
\begin{equation*}
3t_{-1,-1}(u)t_{-1,1}(u+\frac{1}{2})-t_{-1,-1}(u+\frac{1}{2})t_{-1,1}(u)+t_{-1,0}(u+\frac{1}{2})t_{-1,0}(u)=0.
\end{equation*}
Rewriting the above equation in terms of Gauss generators we get
\begin{equation}\label{eq3.25}
\begin{aligned}
&3k_{-1}(u)k_{-1}(u+\frac{1}{2})e_{-1,1}(u+\frac{1}{2})-k_{-1}(u+\frac{1}{2})k_{-1}(u)e_{-1,1}(u)\\
&+k_{-1}(u+\frac{1}{2})e_{-1,0}(u+\frac{1}{2})k_{-1}(u)e_{-1,0}(u)=0.
\end{aligned}
\end{equation}
Then it follows from Eq. (\ref{eq3.10}) that
\begin{equation*}
e_{-1,0}(u+\frac{1}{2})k_{-1}(u)=3k_{-1}(u)e_{-1,0}(u+\frac{1}{2})-2k_{-1}(u)e_{-1,0}(u).
\end{equation*}
Thus Eq. (\ref{eq3.25}) is equivalent to
\begin{equation*}
k_{-1}(u)k_{-1}(u+\frac{1}{2})(3e_{-1,1}(u+\frac{1}{2})-e_{-1,1}(u)+3e_{-1,0}(u+\frac{1}{2})e_{-1,0}(u)-2e^{2}_{-1,0}(u))=0.
\end{equation*}
This implies the result as $k_{i}(u)$'s are invertible.

\end{proof}

\begin{lemma}
In $Y_R(\mathfrak{so}_3)$, we have
\begin{equation*}
e_{-1,1}(u)=[e^{(1)}_{-1,0},e_{-1,0}(u)]-e^{2}_{-1,0}(u)
\end{equation*}
\end{lemma}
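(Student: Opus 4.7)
The plan is to extract the leading behavior in $u$ of a suitable RTT relation to obtain a clean commutator formula $[e^{(1)}_{-1,0}, t_{-1,0}(v)] = t_{-1,1}(v)$, and then translate this into Gauss generators using Eq.~(3.10).

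First I would specialize the RTT relation (2.5) to the indices $(i,j,k,l)=(-1,0,-1,0)$. Since $\delta_{k,-i}=0$ and $\delta_{l,-j}=1$ in this case, a short algebraic rearrangement gives the polynomial form
\begin{equation*}
\frac{(u-v-1)(u-v-\tfrac{1}{2})}{u-v}\,[t_{-1,0}(u), t_{-1,0}(v)] = \sum_{p=-1}^{1} t_{-1,-p}(v)\,t_{-1,p}(u).
\end{equation*}
I would then let $u \to \infty$ at the level of formal series. Since $u\,t_{-1,0}(u) \to e^{(1)}_{-1,0}$ and $\tfrac{(u-v-1)(u-v-1/2)}{u-v} = (u-v-\tfrac{3}{2}) + O(\tfrac{1}{u-v})$ grows linearly in $u$, the left-hand side tends to $[e^{(1)}_{-1,0}, t_{-1,0}(v)]$; on the right-hand side, $t_{-1,p}(u) \to \delta_{-1,p}$, so only the $p=-1$ term survives, yielding $t_{-1,1}(v)$. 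Hence
\begin{equation*}
[e^{(1)}_{-1,0},\, t_{-1,0}(v)] = t_{-1,1}(v).
\end{equation*}

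Next I would translate this identity into Gauss generators. Writing $t_{-1,0}(v) = k_{-1}(v)\,e_{-1,0}(v)$ and $t_{-1,1}(v) = k_{-1}(v)\,e_{-1,1}(v)$ and applying the Leibniz rule to the left-hand side produces $[e^{(1)}_{-1,0}, k_{-1}(v)]\,e_{-1,0}(v) + k_{-1}(v)\,[e^{(1)}_{-1,0}, e_{-1,0}(v)]$. To evaluate the first commutator I would extract the coefficient of $v^{-1}$ from Eq.~(3.10), which gives $[k_{-1}(u), e^{(1)}_{-1,0}] = k_{-1}(u)\,e_{-1,0}(u)$; hence $[e^{(1)}_{-1,0}, k_{-1}(v)] = -k_{-1}(v)\,e_{-1,0}(v)$. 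After this substitution the common left factor $k_{-1}(v)$ can be canceled by invertibility, delivering exactly the stated identity $e_{-1,1}(v) = [e^{(1)}_{-1,0}, e_{-1,0}(v)] - e^2_{-1,0}(v)$.

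The main point to be careful about is the formal-series interpretation of the rational factors $\tfrac{1}{u-v}$ and $\tfrac{1}{u-v-1/2}$: I would expand them as formal series in $u^{-1}$ with $v$ treated as a formal parameter, so that the polynomial rearrangement above and the limit $u \to \infty$ are both rigorous. Notably, this argument does not use the preceding lemma, which will instead be invoked in the proof of Proposition~3.7; once the key commutator formula $[e^{(1)}_{-1,0}, t_{-1,0}(v)] = t_{-1,1}(v)$ is in hand, the remainder is a brief computation using only Eq.~(3.10) and the invertibility of $k_{-1}(v)$.
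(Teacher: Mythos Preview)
Your proof is correct, and the second half (translating $[e^{(1)}_{-1,0},t_{-1,0}(v)]=t_{-1,1}(v)$ into Gauss generators via the Leibniz rule, Eq.~(3.10), and cancellation of $k_{-1}(v)$) is exactly what the paper does. The difference lies in how that key commutator identity is obtained. The paper starts from the RTT relation for $[t_{-1,0}(u),t_{01}(v)]$, lets $v\to\infty$ to get $[t_{-1,0}(u),t^{(1)}_{01}]=t_{-1,1}(u)$, and then must invoke Proposition~3.4 to recognize $t^{(1)}_{01}=e^{(1)}_{01}=-e^{(1)}_{-1,0}$. You instead take the RTT relation for $[t_{-1,0}(u),t_{-1,0}(v)]$, rearrange to the polynomial form, and let $u\to\infty$, using directly that $t^{(1)}_{-1,0}=e^{(1)}_{-1,0}$. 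Your route is slightly more economical since it avoids the detour through $e_{01}$ and Proposition~3.4; the paper's route, on the other hand, makes the appearance of $t_{-1,1}$ more transparent (it comes from a single $\frac{1}{u-v}$ term rather than from the $Q$-part of the $R$-matrix). Both are short and equally valid; your care with the formal-series interpretation of the $u\to\infty$ limit is appropriate.
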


\begin{proof}
It follows from Eq. (\ref{generating relation1}) that
\begin{equation*}
[t_{-1,0}(u),t_{01}(v)]=\frac{1}{u-v}(t_{00}(u)t_{-1,1}(v)-t_{00}(v)t_{-1,1}(u)).
\end{equation*}
Taking $v\rightarrow \infty$ we obtain
\begin{equation*}
[t_{-1,0}(u),t^{(1)}_{01}]=t_{-1,1}(u).
\end{equation*}
Using the Gauss-decomposition of $T(v)$, we know that
\begin{align*}
t_{01}(v)=k_{0}(v)e_{01}(v)+f_{0,-1}(v)k_{-1}(v)e_{-1,0}(v).
\end{align*}
So $t^{(1)}_{01}=e^{(1)}_{01}$. Moreover proposition \ref{Prop3.4}
implies that
$e^{(1)}_{01}=-e^{(1)}_{-1,0}.$
Subsequently
\begin{equation*}
[k_{-1}(u)e_{-1,0}(u),e^{(1)}_{-1,0}]=-k_{-1}(u)e_{-1,1}(u).
\end{equation*}
From Eq. (\ref{eq3.10}) one then gets that
\begin{equation*}
[k_{-1}(u),e^{(1)}_{-1,0}]=k_{-1}(u)e_{-1,0}(u).
\end{equation*}
Since $$[k_{-1}(u)e_{-1,0}(u),e^{(1)}_{-1,0}]=k_{-1}(u)[e_{-1,0}(u),e^{(1)}_{-1,0}]+[k_{-1}(u),e^{(1)}_{-1,0}]e_{-1,0}(u),$$
we have
\begin{equation*}
k_{-1}(u)[e_{-1,0}(u),e^{(1)}_{-1,0}]+k_{-1}(u)e^{2}_{-1,0}(u)=-k_{-1}(u)e_{-1,1}(u).
\end{equation*}
Dividing $k_{i}(u)'s$ we prove the lemma.
\end{proof}

\begin{lemma}
In $Y_R(\mathfrak{so}_3)$ we have
\begin{equation*}
[e^{(1)}_{-1,0},e_{-1,0}(u)]=e^{2}_{-1,0}(u)-e_{-1,0}(u+\frac{1}{2})e_{-1,0}(u)-e_{-1,1}(u+\frac{1}{2}).
\end{equation*}
\end{lemma}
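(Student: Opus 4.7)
The plan is to follow the same template as the proof of Lemma A: pick an appropriate RTT commutator, specialise the spectral parameters so that a useful quadratic constraint appears, translate it to Gauss generators, and simplify using the commutation rules already established. A natural starting point is relation (\ref{generating relation1}) with $(i,j,k,l)=(-1,0,-1,1)$, for which both Kronecker-delta contributions vanish, leaving
\[
[t_{-1,0}(u),t_{-1,1}(v)] = \frac{1}{u-v}\bigl(t_{-1,0}(u)t_{-1,1}(v)-t_{-1,0}(v)t_{-1,1}(u)\bigr).
\]
Specialising $v=u+\frac{1}{2}$ then gives the key identity
\[
3\,t_{-1,0}(u)\,t_{-1,1}(u+\tfrac{1}{2}) = t_{-1,1}(u+\tfrac{1}{2})\,t_{-1,0}(u)+2\,t_{-1,0}(u+\tfrac{1}{2})\,t_{-1,1}(u).
\]

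I would next expand both sides in Gauss generators via $t_{-1,0}=k_{-1}e_{-1,0}$ and $t_{-1,1}=k_{-1}e_{-1,1}$, then push every $k_{-1}$ factor to the far left by iterating the two specialisations of (\ref{eq3.10})
\[
e_{-1,0}(u)k_{-1}(u+\tfrac{1}{2}) = k_{-1}(u+\tfrac{1}{2})\bigl(2e_{-1,0}(u+\tfrac{1}{2})-e_{-1,0}(u)\bigr),
\]
\[
e_{-1,0}(u+\tfrac{1}{2})k_{-1}(u) = k_{-1}(u)\bigl(3e_{-1,0}(u+\tfrac{1}{2})-2e_{-1,0}(u)\bigr),
\]
which were already extracted in the proof of Lemma A. After cancelling the common invertible left factor $k_{-1}(u)k_{-1}(u+\frac{1}{2})$ (using (\ref{eq3.9})), one is left with an identity purely in the Gauss $e$-generators. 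Lemma A can then be used to trade $e_{-1,1}(u)$ for $e_{-1,1}(u+\frac{1}{2})$ together with $e_{-1,0}(u+\frac{1}{2})e_{-1,0}(u)$, and the already proven Lemma B is applied at the end to recognise the surviving combination $e_{-1,1}(u)+e^{2}_{-1,0}(u)$ as the commutator $[e^{(1)}_{-1,0},e_{-1,0}(u)]$ appearing in the claim.

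The main obstacle is that no analogue of (\ref{eq3.10}) has been derived for the generator $e_{-1,1}$, so the term $t_{-1,1}(u+\frac{1}{2})t_{-1,0}(u)$ cannot be reordered by the same direct route. I plan to bypass this by extending the analysis that produced (\ref{eq3.3})--(\ref{eq3.4}) to the $(-1,1)$-entries: a direct Gauss computation gives $(T^{-1}(u))_{-1,1} = (e_{-1,0}(u)e_{01}(u)-e_{-1,1}(u))k_{1}^{-1}(u)$, and comparing this with $(T^{t}(u+\frac{1}{2}))_{-1,1} = t_{-1,1}(u+\frac{1}{2})$, then invoking Proposition \ref{Prop3.4} and (\ref{eq3.5}), yields the auxiliary identity
\[
k_{-1}(u+\tfrac{1}{2})\,e_{-1,1}(u+\tfrac{1}{2}) = \bigl(-e_{-1,0}(u)e_{-1,0}(u-\tfrac{1}{2})-e_{-1,1}(u)\bigr)k_{-1}(u+\tfrac{1}{2}).
\]
This moves $k_{-1}(u+\frac{1}{2})$ past $e_{-1,1}(u+\frac{1}{2})$ at the cost of introducing only quadratic $e_{-1,0}$ terms and an unshifted $e_{-1,1}(u)$, both of which can be freely transported through the remaining diagonal factors. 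Substituting this once at the appropriate place eliminates any need for a direct $[k_{-1}(u),e_{-1,1}(v)]$ commutator, and the remaining algebra then collapses to the identity claimed in Lemma C.
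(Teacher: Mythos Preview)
Your route is genuinely different from the paper's, and it has a real gap at the step you flag as routine.

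The paper does not work with $[t_{-1,0}(u),t_{-1,1}(v)]$ at all. Instead it uses relation (\ref{generating relation2}) --- the commutation between $T(u)$ and $T^{-1}(v)$ --- for the pair $(t_{-1,0}(u),t'_{01}(v))$, and then sends $u\to\infty$. That limit kills every $k_{-1}(u)$ factor at once and isolates $e^{(1)}_{-1,0}$ on the nose; what survives on the $v$-side is built from $t'_{-1,1}(v)$, $t'_{01}(v)$, $t'_{11}(v)$, each of which carries a single $k_1^{-1}(v)$ on the \emph{right}. One then strips $k_1^{-1}(v)$ off the right of the whole identity and substitutes $e_{01}(v)=-e_{-1,0}(v-\tfrac12)$. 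No commutator of a diagonal generator with $e_{-1,1}$ is ever needed.

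In your plan that commutator is exactly where the argument stalls. After you apply the auxiliary identity to the term $t_{-1,1}(u+\tfrac12)t_{-1,0}(u)$, you are left with
\[
\bigl(-e_{-1,0}(u)e_{-1,0}(u-\tfrac12)-e_{-1,1}(u)\bigr)\,k_{-1}(u+\tfrac12)\,k_{-1}(u)\,e_{-1,0}(u),
\]
and the $k$'s must still be pushed to the far left past $e_{-1,1}(u)$ before you can cancel the common prefactor $k_{-1}(u)k_{-1}(u+\tfrac12)$. No relation of the form $[k_{-1}(w),e_{-1,1}(v)]=\cdots$ has been established at this point, and your auxiliary identity only swaps $k_{-1}(u+\tfrac12)$ with $e_{-1,1}(u+\tfrac12)$ at that \emph{one} matched shift; it does not let you move $k_{-1}(u+\tfrac12)k_{-1}(u)$ past $e_{-1,1}(u)$. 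Invoking Lemma~B to rewrite $e_{-1,1}(u)$ as $[e^{(1)}_{-1,0},e_{-1,0}(u)]-e_{-1,0}^2(u)$ does give you something you can commute, but then the identity you are trying to prove reappears inside its own derivation, and it is far from clear that the resulting relation is not a tautology rather than the desired statement.

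There is also a structural redundancy worth noticing: your auxiliary identity, once the $k_{-1}(u+\tfrac12)$ conjugation is resolved, is already equivalent (via Lemma~B) to the statement of Lemma~C itself. So the whole detour through $[t_{-1,0}(u),t_{-1,1}(u+\tfrac12)]$ and Lemma~A is unnecessary; the actual content lies in undoing that conjugation, which is precisely the missing step. The paper's $u\to\infty$ trick with $T^{-1}(v)$ bypasses the conjugation problem entirely by arranging all diagonal factors on one side from the start.
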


\begin{proof}
From Eq. (\ref{generating relation2}) we compute that
\begin{equation*}
\begin{aligned}
&[t_{-1,0}(u),t'_{01}(v)]=\frac{1}{u-v-\frac{1}{2}}(t'_{01}(v)t_{-1,0}(u)-t_{-1,0}(u)t'_{01}(v))\\
&+\frac{1}{u-v}(t_{-1,-1}(u)t'_{-1,1}(v)+t_{-1,0}(u)t'_{01}(v)+t_{-1,1}(u)t'_{11}(v)).
\end{aligned}
\end{equation*}
which is equivalent to
\begin{equation*}
(u-v)\frac{u-v+\frac{1}{2}}{u-v-\frac{1}{2}}[t_{-1,0}(u),t'_{01}(v)]=
t_{-1,-1}(u)t'_{-1,1}(v)+t_{-1,0}(u)t'_{01}(v)+t_{-1,1}(u)t'_{11}(v).
\end{equation*}
Taking $u\rightarrow \infty$ and invoking Gauss decomposition of $T(u)$ and $T'(u)$ we obtain that
\begin{equation}\label{eq3.26}
-[e^{(1)}_{-1,0},e_{01}(v)k^{-1}_{1}(v)]=e_{-1,0}(v)e_{01}(v)k^{-1}_{1}(v)-e_{-1,1}(v)k^{-1}_{1}(v).
\end{equation}

Note that from Eq. (\ref{eq3.10}) we have
\begin{align*}
[e^{(1)}_{-1,0},k^{-1}_{1}(v)]=e_{01}(v)k^{-1}_{1}(v),
\end{align*}
then we derive
\begin{equation}\label{eq3.27}
-[e^{(1)}_{-1,0},e_{01}(v)k^{-1}_{1}(v)]=-[e^{(1)}_{-1,0},e_{01}(v)]k^{-1}_{1}(v)-e^{2}_{01}(v)k^{-1}_{1}(v).
\end{equation}
Combining equations (\ref{eq3.26}) and (\ref{eq3.27}) we get
\begin{equation*}
-[e^{(1)}_{-1,0},e_{01}(v)]-e^{2}_{01}(v)=e_{-1,0}(v)e_{01}(v)-e_{-1,1}(v).
\end{equation*}
Finally using Eq. (\ref{eq3.17}) we obtain the following
equation
\begin{equation*}
[e^{(1)}_{-1,0},e_{-1,0}(v-\frac{1}{2})]-e^{2}_{-1,0}(v-\frac{1}{2})=-e_{-1,0}(v)e_{-1,0}(v-\frac{1}{2})-e_{-1,1}(v).
\end{equation*}
The lemma is obtained if we take $u=v-\frac{1}{2}$.
\end{proof}

Next we can prove an interesting relation between $e_{-1,1}(u)$ and $e_{-1,0}(u)$.
\begin{lemma}\label{lemma3.11}
In $Y_R(\mathfrak{so}_3)$, we have
\begin{equation*}
e_{-1,1}(u)=-\frac{1}{2}e^{2}_{-1,0}(u).
\end{equation*}
\end{lemma}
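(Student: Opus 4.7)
The plan is to combine the three preceding lemmas by pure linear algebra, eliminating the shifted quantity $e_{-1,1}(u+\tfrac{1}{2})$ and the cross product $e_{-1,0}(u+\tfrac{1}{2})e_{-1,0}(u)$ so that only $e_{-1,1}(u)$ and $e^{2}_{-1,0}(u)$ survive.

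First I would substitute the third lemma's expression for $[e^{(1)}_{-1,0},e_{-1,0}(u)]$ into the second lemma's formula for $e_{-1,1}(u)$. The two copies of $e^{2}_{-1,0}(u)$ cancel and the commutator is eliminated entirely, yielding the identity
\[
e_{-1,1}(u)+e_{-1,1}(u+\tfrac{1}{2})=-e_{-1,0}(u+\tfrac{1}{2})\,e_{-1,0}(u). \qquad (\ast)
\]

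Next I would use $(\ast)$ to replace $e_{-1,0}(u+\tfrac{1}{2})e_{-1,0}(u)$ in the first lemma. A short calculation shows that the coefficient of $e_{-1,1}(u+\tfrac{1}{2})$ becomes $3-3=0$, so the shifted generator $e_{-1,1}(u+\tfrac{1}{2})$ drops out simultaneously with the cross product, and one is left with $-4\,e_{-1,1}(u)-2\,e^{2}_{-1,0}(u)=0$. Dividing by $-4$ produces the claimed identity $e_{-1,1}(u)=-\tfrac{1}{2}\,e^{2}_{-1,0}(u)$.

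There is essentially no obstacle: the three auxiliary lemmas were designed precisely so that the coefficients match and both unwanted quantities disappear after one linear combination. The only subtlety worth flagging is that the mixed product $e_{-1,0}(u+\tfrac{1}{2})e_{-1,0}(u)$ occurs with the \emph{same} operator ordering in both identities it is used in, so no commutation relation between $e_{-1,0}$-generators at different spectral parameters is needed; this is important because Proposition~\ref{prop3.7}, which would supply such a relation, has not yet been established at this stage and indeed depends on the lemma we are proving.
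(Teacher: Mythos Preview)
Your proof is correct and follows essentially the same route as the paper: combine the second and third auxiliary lemmas to eliminate the commutator and obtain $(\ast)$, then feed this into the first auxiliary lemma so that both $e_{-1,1}(u+\tfrac12)$ and the cross product drop out, leaving $-4e_{-1,1}(u)-2e^{2}_{-1,0}(u)=0$. The paper organizes the same linear combination slightly differently (it rewrites the first lemma as $-e_{-1,0}(u+\tfrac12)e_{-1,0}(u)-e_{-1,1}(u+\tfrac12)=-\tfrac13(e_{-1,1}(u)+2e^{2}_{-1,0}(u))$ and equates it with $(\ast)$), but the algebra is identical; your remark about the consistent operator ordering and the non-use of Proposition~\ref{prop3.7} is a correct and useful observation.
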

\begin{proof}
Using previous lemmas, we have
\begin{equation*}
e_{-1,1}(u)=-e_{-1,0}(u+\frac{1}{2})e_{-1,0}(u)-e_{-1,1}(u+\frac{1}{2}).
\end{equation*}
Moreover we also have
\begin{equation*}
-e_{-1,0}(u+\frac{1}{2})e_{-1,0}(u)-e_{-1,1}(u+\frac{1}{2})=-\frac{1}{3}(e_{-1,1}(u)+2e^{2}_{-1,0}(u)).
\end{equation*}
The lemma then follows immediately.

\end{proof}

Similarly, we can get the relation between $f_{1,-1}(u)$ and $f_{1,0}(u)$.
\begin{lemma}\label{lemma3.12}
In $Y_R(\mathfrak{so}_3)$, we have
\begin{equation*}
f_{1,-1}(u)=-\frac{1}{2}f^{2}_{1,0}(u).
\end{equation*}
\end{lemma}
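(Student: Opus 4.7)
The plan is to mirror the three-step derivation used for Lemma~\ref{lemma3.11}, interchanging the roles of rows and columns. I will work with the column-$(-1)$ entries $t_{\bullet,-1}$, which have the pure Gauss decomposition $t_{j,-1}(u)=f_{j,-1}(u)k_{-1}(u)$ (with $f_{-1,-1}:=1$), in place of the row-$(-1)$ entries used before. This produces the mirror trio of auxiliary identities.

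First, apply (\ref{generating relation1}) to $[t_{1,-1}(u),t_{-1,-1}(u+\tfrac{1}{2})]$; the $\tfrac{1}{u-v-1/2}$-pole at $v=u+\tfrac{1}{2}$ forces a nontrivial contribution from $\sum_p t_{p,-1}(u)t_{-p,-1}(u+\tfrac{1}{2})$, and after Gauss substitution and moving the $k_{-1}$'s to the right via (\ref{eq3.11}) and (\ref{eq3.9}) one obtains
\[
3f_{1,-1}(u+\tfrac{1}{2}) - f_{1,-1}(u) + 3f_{0,-1}(u)f_{0,-1}(u+\tfrac{1}{2}) - 2f^2_{0,-1}(u) = 0.
\]
Second, apply (\ref{generating relation1}) to $[t_{0,-1}(u),t_{1,0}(v)]$ and let $v\to\infty$; Proposition~\ref{Prop3.4} identifies $f^{(1)}_{1,0}=-f^{(1)}_{0,-1}$, and cancelling $k_{-1}$ yields
\[
f_{1,-1}(u) = -[f^{(1)}_{0,-1},f_{0,-1}(u)] - f^2_{0,-1}(u).
\]
Third, apply (\ref{generating relation2}) to $[t_{0,-1}(u),t'_{1,0}(v)]$ and again let $v\to\infty$, using the Gauss expression $t'_{1,-1}(v)=k_1^{-1}(v)\bigl(f_{1,0}(v)f_{0,-1}(v)-f_{1,-1}(v)\bigr)$ together with (\ref{eq3.5})--(\ref{eq3.7}), to obtain the dual
\[
[f^{(1)}_{0,-1},f_{0,-1}(u)] = -f^2_{0,-1}(u) + f_{0,-1}(u)f_{0,-1}(u+\tfrac{1}{2}) + f_{1,-1}(u+\tfrac{1}{2}).
\]

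Combining the second and third identities eliminates the bracket and one copy of $f^2_{0,-1}$, giving $f_{1,-1}(u) = -f_{0,-1}(u)f_{0,-1}(u+\tfrac{1}{2}) - f_{1,-1}(u+\tfrac{1}{2})$. Substituting the first identity, solved as $f_{1,-1}(u+\tfrac{1}{2}) = \tfrac{1}{3}f_{1,-1}(u) - f_{0,-1}(u)f_{0,-1}(u+\tfrac{1}{2}) + \tfrac{2}{3}f^2_{0,-1}(u)$, then yields $\tfrac{4}{3}f_{1,-1}(u) = -\tfrac{2}{3}f^2_{0,-1}(u)$, so $f_{1,-1}(u) = -\tfrac{1}{2}f^2_{0,-1}(u)$. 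Finally, Proposition~\ref{Prop3.4} converts $f_{0,-1}$ into $f_{1,0}$ to produce the stated identity. The main obstacle is the noncommutative bookkeeping: since $f_{0,-1}(u)$ and $f_{0,-1}(u+\tfrac{1}{2})$ do not commute and $k_{-1}$ does not commute with $f_{1,-1}$ (a commutation relation that must itself be derived en route from a separate specialization of (\ref{generating relation1})), the orderings in Steps~1 and~3 must coincide in the unshifted-then-shifted form $f_{0,-1}(u)f_{0,-1}(u+\tfrac{1}{2})$ in order for the quadratic terms to combine correctly in Step~4 — this is the reverse of the ordering encountered in the $e$-version of the proof.
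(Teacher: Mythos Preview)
Your approach of mirroring the three auxiliary lemmas from the $e$-case is exactly what the paper intends by ``Similarly,'' and the mirror argument you sketch correctly produces
\[
f_{1,-1}(u)=-\tfrac{1}{2}f^{2}_{0,-1}(u).
\]

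The gap is in your very last step. Proposition~\ref{Prop3.4} reads $f_{1,0}(u)=-f_{0,-1}(u-\tfrac{1}{2})$, so $f^{2}_{0,-1}(u)=f^{2}_{1,0}(u+\tfrac{1}{2})$, \emph{not} $f^{2}_{1,0}(u)$; the conversion you announce therefore does not give the displayed formula. In fact the identity you have actually established, $f_{1,-1}(u)=-\tfrac{1}{2}f^{2}_{0,-1}(u)$, is the genuine mirror of Lemma~\ref{lemma3.11} (swapping row and column indices sends $e_{-1,0}\leftrightarrow f_{0,-1}$ and $e_{-1,1}\leftrightarrow f_{1,-1}$), and it is precisely the form needed in the ``similar'' derivation of (\ref{eq3.24}), where one works with $t_{1,-1}(u)=f_{1,-1}(u)k_{-1}(u)$ and $t_{0,-1}(u)=f_{0,-1}(u)k_{-1}(u)$. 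The appearance of $f_{1,0}$ rather than $f_{0,-1}$ in the paper's statement of Lemma~\ref{lemma3.12} is almost certainly a typographical slip; your extra final step was an attempt to match that slip and should simply be dropped.
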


Now we are ready to prove Proposition \ref{prop3.7}.
\begin{proof} It is enough to check Eq. (\ref{eq3.23}), as Eq. (\ref{eq3.24}) can be treated similarly.

Using the generating relations (\ref{generating relation1}) we get
\begin{equation*}
\begin{aligned}
&[t_{-1,0}(u),t_{-1,0}(v)]=\frac{1}{u-v}(t_{-1,0}(u)t_{-1,0}(v)-t_{-1,0}(v)t_{-1,0}(u))+\\
&\frac{1}{u-v-\frac{1}{2}}(t_{-1,1}(v)t_{-1,-1}(u)+t_{-1,0}(v)t_{-1,0}(u)+t_{-1,-1}(v)t_{-1,1}(u)),
\end{aligned}
\end{equation*}

\begin{equation*}
\begin{aligned}
&[t_{-1,-1}(u),t_{-1,1}(v)]=\frac{1}{u-v}(t_{-1,-1}(u)t_{-1,1}(v)-t_{-1,-1}(v)t_{-1,1}(u))+\\
&\frac{1}{u-v-\frac{1}{2}}(t_{-1,1}(v)t_{-1,-1}(u)+t_{-1,0}(v)t_{-1,0}(u)+t_{-1,-1}(v)t_{-1,1}(u)).
\end{aligned}
\end{equation*}
Comparing these two equations we obtain that
\begin{equation}\label{eq3.28}
\begin{aligned}
&[t_{-1,0}(u),t_{-1,0}(v)]=t_{-1,-1}(u)t_{-1,1}(v)
+\frac{1}{u-v-1}t_{-1,-1}(v)t_{-1,1}(u)\\
&-\frac{u-v}{u-v-1}t_{-1,1}(v)t_{-1,-1}(u).
\end{aligned}
\end{equation}
Using the defining relations (\ref{generating relation1}) again, we have
\begin{equation*}
[t_{-1,-1}(v),t_{-1,0}(u)]=\frac{1}{v-u}(t_{-1,-1}(v)t_{-1,0}(u)-t_{-1,-1}(u)t_{-1,0}(v)).
\end{equation*}
In terms of Gauss generators they are
\begin{equation*}
\begin{aligned}
k_{-1}(u)e_{-1,0}(u)k_{-1}(v)&=k_{-1}(u)k_{-1}(v)e_{-1,0}(u)+
&\frac{1}{u-v}k_{-1}(u)k_{-1}(v)(e_{-1,0}(u)-e_{-1,0}(v)).
\end{aligned}
\end{equation*}
Since $t_{-1,0}(u)t_{-1,0}(v)=k_{-1}(u)e_{-1,0}(u)k_{-1}(v)e_{-1,0}(v)$,
we can write
\begin{align*}
t_{-1,0}(u)t_{-1,0}(v)&=k_{-1}(u)k_{-1}(v)e_{-1,0}(u)e_{-1,0}(v)\\
&+\frac{1}{u-v}k_{-1}(u)k_{-1}(v)(e_{-1,0}(u)e_{-1,0}(v)-e^2_{-1,0}(v)),
\end{align*}
and
\begin{align*}
t_{-1,0}(v)t_{-1,0}(u)&=k_{-1}(v)k_{-1}(u)e_{-1,0}(v)e_{-1,0}(u)\\
&+\frac{1}{v-u}k_{-1}(v)k_{-1}(u)(e_{-1,0}(v)e_{-1,0}(u)-e^2_{-1,0}(u)).
\end{align*}
Then we get to the left-hand side of Eq. (\ref{eq3.28})
\begin{align*}
[t_{-1,0}(u),t_{-1,0}(v)]&=k_{-1}(u)k_{-1}(v)[e_{-1,0}(u),e_{-1,0}(v)]\\
&-\frac{1}{u-v}k_{-1}(u)k_{-1}(v)(e_{-1,0}(u)
-e_{-1,0}(v))^2.
\end{align*}

Now we consider the right-hand side of Eq. (\ref{eq3.28}).
Using the Gauss-decomposition and Lemma \ref{lemma3.11} we have
\begin{equation*}
t_{-1,1}(v)t_{-1,-1}(u)=-\frac{1}{2}k_{-1}(v)e^{2}_{-1,0}(v)k_{-1}(u).
\end{equation*}
Using Eq. (\ref{eq3.10}) we finally get
\begin{equation*}
\begin{aligned}
e^{2}_{-1,0}(v)k_{-1}(u)&=\frac{(u-v-1)^2}{(u-v)^2}k_{-1}(u)e^{2}_{-1,0}(v)+\frac{u-v-1}{(u-v)^2}k_{-1}(u)e_{-1,0}(u)e_{-1,0}(v)\\
&+\frac{u-v-1}{(u-v)^2}k_{-1}(u)e_{-1,0}(v)e_{-1,0}(u)+\frac{1}{(u-v)^2}k_{-1}(u)e^{2}_{-1,0}(u).\\
\end{aligned}
\end{equation*}
Then using Gauss decomposition of $T(u)$ and Lemma (\ref{lemma3.11}) again, we obtain the right hand side of Eq. (\ref{eq3.28})
equals to
\begin{equation*}
-\frac{1}{2}\frac{1}{u-v}k_{-1}(u)k_{-1}(v)(e_{-1,0}(u)-e_{-1,0}(v))^2.
\end{equation*}
Taking the consideration of the LHS we prove the proposition. 
\end{proof}

\section{The isomorphism between RTT realization and Drinfeld's realization}
We recall the Drinfeld's realization of the Yangian associated to $\mathfrak{so}_3$,
denoted by $Y(\mathfrak{so}_3)$ in the following.
\begin{definition}
$Y(\mathfrak{so}_3)$ is the associative algebra generated by infinite generators
$h_l$, $x^{\pm}_l$, (l=0,1,2,...),
subject to the following defining relations:
\begin{equation*}
[h_k,h_l]=0, [x^+_k,x^-_l]=h_{k+l}, [h_0,x^{\pm}_l]=\pm x^{\pm}_{l},
\end{equation*}

\begin{equation*}
[h_{k+1},x^{\pm}_l]-[h_k,x^{\pm}_{l+1}]=\pm \frac{1}{2}\{h_k,x^{\pm}_{l}\},
\end{equation*}

\begin{equation*}
[x^{\pm}_{k+1},x^{\pm}_l]-[x^{\pm}_k,x^{\pm}_{l+1}]=\pm \frac{1}{2}\{x^{\pm}_k,x^{\pm}_{l}\}.
\end{equation*}
\end{definition}

Let $X^{\pm}(u)=\sum_{k=0}^{\infty}x^{\pm}_ku^{-k-1}$, $H(u)=1+\sum_{k=0}^{\infty}h_ku^{-k-1}$
be the generating series, then we have the following proposition.
\begin{proposition}\label{prop4.2}
The defining relations of $Y(\mathfrak{so}_3)$ are equivalent to the following form in terms of generating series:

\begin{equation}
[H(u),H(v)]=0,
\end{equation}

\begin{equation}\label{eq4.2}
[X^{+}(u),X^{-}(v)]=-\frac{H(u)-H(v)}{u-v},
\end{equation}

\begin{equation}
[H(u),X^{+}(v)]=-\frac{1}{2}\frac{\{H(u),(X^{+}(u)-X^{+}(v))\}}{u-v},
\end{equation}

\begin{equation}
[H(u),X^{-}(v)]=\frac{1}{2}\frac{\{H(u),(X^{-}(u)-X^{-}(v))\}}{u-v},
\end{equation}

\begin{equation}
[X^{+}(u),X^{+}(v)]=-\frac{1}{2}\frac{(X^{+}(u)-X^{+}(v))^2}{u-v},
\end{equation}

\begin{equation}
[X^{-}(u),X^{-}(v)]=\frac{1}{2}\frac{(X^{-}_{i}(u)-X^{-}_{i}(v))^2}{u-v}.
\end{equation}
\end{proposition}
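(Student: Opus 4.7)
The plan is to treat each of the six proposed identities as a formal power series equality in $u^{-1}, v^{-1}$ and to match the coefficient of $u^{-k-1}v^{-l-1}$ against the corresponding defining relation on the generators $h_k, x^\pm_l$. The key technical tool is the expansion
\begin{equation*}
\frac{u^{-k-1} - v^{-k-1}}{u - v} = -\sum_{i+j=k} u^{-i-1}v^{-j-1},
\end{equation*}
valid in $\mathbb{Z}[[u^{-1}, v^{-1}]]$, which shows that each apparent $(u-v)^{-1}$ denominator on the right-hand sides is in fact a bona fide double power series.

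The first two identities are essentially immediate: $[H(u), H(v)] = 0$ is term-by-term equivalent to $[h_k, h_l] = 0$, and expanding the right-hand side of $[X^+(u), X^-(v)] = -(H(u) - H(v))/(u-v)$ via the key identity gives $\sum_{i,j \geq 0} h_{i+j}u^{-i-1}v^{-j-1}$, so coefficient comparison immediately yields $[x^+_k, x^-_l] = h_{k+l}$. For each of the remaining four identities, which have the schematic form $[Z(u), X^\pm(v)] = \mp \tfrac{1}{2}\{Z(u), (X^\pm(u) - X^\pm(v))/(u-v)\}$, I would apply the key identity to the fraction inside the anticommutator and then extract the coefficient of $u^{-k-1}v^{-l-1}$. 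For example, the $[H(u), X^+(v)]$ relation produces
\begin{equation*}
[h_k, x^+_l] = x^+_{k+l} + \tfrac{1}{2}\sum_{\substack{a+i = k-1 \\ a, i \geq 0}} \{h_a, x^+_{i+l}\},
\end{equation*}
whose $k = 0$ case reads $[h_0, x^+_l] = x^+_l$, and whose difference of the $(k+1, l)$ and $(k, l+1)$ instances telescopes to the defining relation $[h_{k+1}, x^+_l] - [h_k, x^+_{l+1}] = \tfrac{1}{2}\{h_k, x^+_l\}$. Conversely, an induction on $k$ reconstructs the series identity from the generator relations, using $[h_0, x^+_l] = x^+_l$ as the base case.

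The other three identities (for $H, X^-$ and the two Serre-type relations on $X^\pm$) are handled by the same bookkeeping; in each case the analogous coefficient identity telescopes to the original defining relation and its $k = 0$ case reproduces the base relation $[h_0, x^\pm_l] = \pm x^\pm_l$ or $[x^\pm_0, x^\pm_l] = 0$. There is no single conceptual obstacle here; the only real work is careful indexing through the expansion of $(X^\pm(u) - X^\pm(v))/(u-v)$ and verification that the boundary terms at $k = 0$ or $l = 0$ are consistent with the base defining relations.
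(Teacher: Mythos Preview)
Your approach---expand each generating-series identity as a double power series using $\dfrac{u^{-k-1}-v^{-k-1}}{u-v}=-\sum_{i+j=k}u^{-i-1}v^{-j-1}$ and compare coefficients of $u^{-k-1}v^{-l-1}$---is precisely what the paper does; the paper itself only remarks that the proof is obtained ``by comparing the coefficients of the two sides of equations'' and gives no further detail, so your write-up in fact goes beyond what the paper provides.

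There is, however, one genuine slip in your sketch. For the Serre-type relations you assert that the $k=0$ coefficient identity ``reproduces the base relation $[x^{\pm}_0,x^{\pm}_l]=0$''. This is false: extracting the coefficient of $u^{-1}v^{-l-1}$ in $[X^{+}(u),X^{+}(v)]=-\tfrac{1}{2}(X^{+}(u)-X^{+}(v))^2/(u-v)$ gives
\[
[x^{+}_0,x^{+}_l]=-\tfrac{1}{4}\sum_{m+j=l-1}\{x^{+}_m,x^{+}_j\},
\]
which is nonzero already for $l=1$ (namely $-\tfrac12(x^{+}_0)^2$), and moreover $[x^{\pm}_0,x^{\pm}_l]=0$ is not among the defining relations. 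The correct anchor for the converse direction is instead the trivially satisfied diagonal relation $[x^{\pm}_p,x^{\pm}_p]=0$ (together with the $k=l$ instance of the defining recursion, which fixes $[x^{\pm}_{p+1},x^{\pm}_p]=\tfrac12(x^{\pm}_p)^2$); from there the recursion $[x^{\pm}_{k+1},x^{\pm}_l]=[x^{\pm}_k,x^{\pm}_{l+1}]+\tfrac12\{x^{\pm}_k,x^{\pm}_l\}$ determines all $[x^{\pm}_k,x^{\pm}_l]$ with $k+l=n$ and one checks they agree with the coefficient formula. With this correction your plan goes through.
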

\begin{remark}
The proof of proposition \ref{prop4.2} can be derived by comparing the coefficients of the two sides of equations.
\end{remark}
Now we can give the main result in the following theorem.
\begin{theorem}
The map $\Phi :Y(\mathfrak{so}_3)\rightarrow Y_R(\mathfrak{so}_3)$ given by
\begin{align}
X^{-}(u)\mapsto e_{-1,0}(u), X^{+}(u)\mapsto f_{0,-1}(u), H(u)\mapsto k_{-1}^{-1}(u)k_{0}(u)
\end{align}
is an isomorphism.
\end{theorem}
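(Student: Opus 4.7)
The plan is to verify that $\Phi$ is a well-defined algebra homomorphism by matching the six defining relations of $Y(\mathfrak{so}_3)$ stated in Proposition~\ref{prop4.2} against the commutation relations among Gauss generators established throughout Section~3, and then to prove bijectivity through a direct argument for surjectivity and a PBW-type comparison for injectivity.

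For the homomorphism check, each of the six relations in Proposition~\ref{prop4.2} has a precise counterpart in the excerpt, under the crossed identification $X^+\leftrightarrow f_{0,-1}$ and $X^-\leftrightarrow e_{-1,0}$. The Cartan commutativity $[H(u),H(v)]=0$ reduces, via the identity $k_{-1}^{-1}(u)k_{0}(u)=k_{-1}^{-1}(u+\tfrac12)$ from Proposition~\ref{Prop3.5}, to $[k_{-1}(u),k_{-1}(v)]=0$ from Proposition~\ref{Prop3.2}. The mixed relation (\ref{eq4.2}) is Proposition~\ref{Prop3.3} after swapping arguments and taking the negative. The two $[H,X^{\pm}]$ relations are exactly (\ref{eq3.21}) and (\ref{eq3.22}) in Proposition~\ref{prop3.6}, with the signs lining up correctly thanks to the opposite identification of $X^{+}$ with $f_{0,-1}$. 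Finally, the two Serre-like $[X^{\pm},X^{\pm}]$ relations match (\ref{eq3.23}) and (\ref{eq3.24}) of Proposition~\ref{prop3.7} in the same way.

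For surjectivity, by Theorem~\ref{Gauss decth} every $t_{ij}(u)$ is a polynomial expression in the Gauss generators, so it suffices to show that each Gauss generator lies in the image of $\Phi$. The identity $\Phi(H(u))=k_{-1}^{-1}(u+\tfrac12)$ from Proposition~\ref{Prop3.5} shows that $k_{-1}(u)=\Phi(H(u-\tfrac12))^{-1}$ is in the image; then $k_{0}(u)=k_{-1}(u)k_{-1}^{-1}(u+\tfrac12)$ and, by (\ref{eq3.5}), $k_{1}(u)=k_{-1}^{-1}(u+\tfrac12)$ are as well. The generators $e_{-1,0}$ and $f_{0,-1}$ are images of $X^{-}$ and $X^{+}$ by construction; $e_{01}$ and $f_{10}$ are recovered from Proposition~\ref{Prop3.4}; and $e_{-1,1}$, $f_{1,-1}$ reduce to quadratic expressions in the previous generators by Lemmas~\ref{lemma3.11} and~\ref{lemma3.12}. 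Hence $\Phi$ is surjective.

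The main obstacle is injectivity, which I would handle by a filtration comparison. Equip $Y(\mathfrak{so}_3)$ with the loop filtration in which $h_{k}$ and $x^{\pm}_{k}$ have degree $k$, and $Y_{R}(\mathfrak{so}_3)$ with the compatible filtration assigning degree $r-1$ to each Gauss generator of order $r$; the defining relations of both algebras show these filtrations are algebra filtrations, and the map $\Phi$ respects them. By Drinfeld's original PBW theorem the associated graded of $Y(\mathfrak{so}_3)$ is $U(\mathfrak{so}_3[u])$, and the AMR PBW theorem identifies the associated graded of $Y_{R}(\mathfrak{so}_3)$ with the same algebra. The induced graded map is surjective between graded algebras of equal graded dimension and is easily checked to be the identity on generators of $U(\mathfrak{so}_3[u])$; consequently $\text{gr}\,\Phi$ is an isomorphism, and therefore so is $\Phi$.
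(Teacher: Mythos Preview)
Your verification that $\Phi$ is a homomorphism and your surjectivity argument are exactly those of the paper: the same propositions (\ref{Prop3.2}, \ref{Prop3.3}, \ref{prop3.6}, \ref{prop3.7}) establish the six relations, and the same chain (Gauss decomposition, Eq.~(\ref{eq3.5}), Propositions~\ref{Prop3.4}, \ref{Prop3.5}, Lemmas~\ref{lemma3.11}, \ref{lemma3.12}) produces every Gauss generator from the image.

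For injectivity the two approaches diverge. The paper does not introduce filtrations or appeal to PBW theorems; instead it simply observes that, since $k_{-1}(u)$, $e_{-1,0}(u)$, $f_{0,-1}(u)$ generate $Y_R(\mathfrak{so}_3)$, one can write down a candidate inverse
\[
e_{-1,0}(u)\mapsto X^{-}(u),\qquad f_{0,-1}(u)\mapsto X^{+}(u),\qquad k_{-1}(u)\mapsto H^{-1}(u-\tfrac12),
\]
and declares this to be $\Phi^{-1}$. This is shorter and avoids invoking external structure theorems, but it leaves implicit the check that this assignment respects \emph{all} relations of $Y_R(\mathfrak{so}_3)$ (not just the ones catalogued in Section~3). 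Your filtration argument, by contrast, offloads that work onto the Drinfeld and AMR PBW theorems: once $\mathrm{gr}\,\Phi$ is identified with the identity on $U(\mathfrak{so}_3[u])$, injectivity is automatic. Your route is more robust and more easily generalized to higher rank, at the cost of importing two nontrivial results; the paper's route is self-contained in spirit but relies on the reader accepting that the Section~3 relations are a complete presentation of $Y_R(\mathfrak{so}_3)$ in the Gauss generators.
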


\begin{proof}
It is easy to verify that $\Phi$ is a homomorphism from Propositions \ref{Prop3.2}, \ref{Prop3.3}, \ref{prop3.6}, and \ref{prop3.7}.
Furthermore, the surjectivity is obtained by using the Gauss decomposition of $T(u)$ and Eq. (\ref{eq3.5}), Propositions
\ref{Prop3.4}, \ref{Prop3.5}, \ref{lemma3.11} and Lemma \ref{lemma3.12}. Since the Gauss generators $k_{-1}(u)$, $e_{-1,0}(u)$,
$f_{0,-1}(u)$ can generate the algebra $Y_R(\mathfrak{so}_3)$, the inverse map of $\Phi$ can be given by
\begin{align}
e_{-1,0}(u)\mapsto X^{-}(u) , f_{0,-1}(u)\mapsto X^{+}(u), k_{-1}(u)\mapsto H^{-1}(u-\frac{1}{2}).
\end{align}

\end{proof}

\medskip

\centerline{\bf Acknowledgments}
NJ gratefully acknowledges the partial support of Max-Planck Institut f\"ur Mathematik in Bonn, Simons Foundation grant 198129, and NSFC grant 11271138 during this work.

\bibliographystyle{amsalpha}

\end{document}